    \newtheorem{lemma}{Lemma}
    \newtheorem{assumption}{Assumption}
    \newtheorem{property}{Property}
    \newtheorem{remark}{Remark}
    \newtheorem{theorem}{Theorem}
    \newtheorem {notation}{Notation}
\renewcommand\qedsymbol{$\blacksquare$} 
\newcommand{\becauseof}[2][=]{\stackrel{\scriptstyle\mkern-1.5mu#2\mkern-1.5mu}{#1}}
\def\moveEq#1{{}\mkern#1mu} 
\def\vv#1{{ \rm \bf{#1}}} 
\def\Id{{\rm{I}}}
\def\N{ {\rm \,I\!N} } 
\def\R{{\mathbb{R}}} 
\def\Z{{\mathbb{Z}}} 
\newcommand{\Propper}[1][n]{\Gamma_#1}
\def\cX{{\mathcal{X}}} 
\def\cA{{\mathcal{A}}} 
\def\bx{\bar{x}} 
\def\bn{n} 
\def\bm{\bar{m}} 
\def\set#1#2{\{ \; #1 \;:\;#2\;\}} 
\def\fracg#1#2{{\displaystyle{\frac{#1}{#2}}}}  
\def\Sum#1#2{\sum\limits_{#1}^{#2}} 
\def\Prod#1#2{\prod\limits_{#1}^{#2}} 
\def\ceil#1{\left\lceil #1 \right\rceil} 
\def\floor#1{\left\lfloor #1 \right\rfloor} 
\def\dom#1{{\rm dom}(#1)} 
\def\bmat#1{\left[\begin{array}{#1}} 
\def\emat{\end{array}\right]} 
\newcommand {\T}{^{\top}} 
    \newcommand{\blista}{\begin{enumerate}[label=(\textit{\roman{enumi})}]}
\newcommand{\elista}{\end{enumerate}}
\newcommand {\bsis} {\left\{ \begin{array} }
	\newcommand {\esis} {\end{array}\right.}
\begin{document}
\pagestyle{fancy}

\title{\LARGE Restart of accelerated first order methods with linear convergence under a quadratic functional growth condition}

\author{Teodoro~Alamo,~Pablo~Krupa,~Daniel~Limon
\thanks{The authors are with the Department of Systems Engineering and Automation, Universidad de Sevilla, 41092 Seville, Spain (e-mails: \texttt{talamo@us.es}, \texttt{pkrupa@us.es}, \texttt{dlm@us.es}). Corresponding author: Teodoro Alamo.}%
\thanks{This work was supported in part by Grant PID2019-106212RB-C41 funded by MCIN/AEI/10.13039/501100011033, in part by Grant PDC2021-121120-C21 funded by MCIN/AEI/10.13039/501100011033 and by the ``European Union NextGenerationEU/PRTR", in part by Grant P20\_00546 funded by the Junta de Andalucía and the Fondos Europeos para el Desarrollo Regional (FEDER), and in part by Grant Margarita Salas (20122) funded by the Ministerio de Universidades and the European Union (NextGenerationEU).~\hfill}
\thanks{\copyright{}2022 IEEE. Personal use of this material is permitted. Permission from IEEE must be obtained for all other uses, in any current or future media, including reprinting/republishing this material for advertising or promotional purposes, creating new collective works, for resale or redistribution to servers or lists, or reuse of any copyrighted component of this work in other works.~\hfill}%
}

\maketitle
\thispagestyle{fancy}


\begin{abstract}
Accelerated first order methods, also called fast gradient methods, are popular optimization methods in the field of convex optimization.
However, they are prone to suffer from oscillatory behaviour that slows their convergence when medium to high accuracy is desired.
In order to address this, restart schemes have been proposed in the literature, which seek to improve the practical convergence by suppressing the oscillatory behaviour.
This paper presents a restart scheme for accelerated first order methods for which we show linear convergence under the satisfaction of a quadratic functional growth condition, thus encompassing a broad class of non-necessarily strongly convex optimization problems.
Moreover, the worst-case convergence rate is comparable to the one obtained using a (generally non-implementable) optimal fixed-rate restart strategy.
We compare the proposed algorithm with other restart schemes by applying them to a model predictive control case study.
\end{abstract}

\begin{IEEEkeywords}
Convex Optimization, Accelerated First Order Methods, Restart Schemes, Linear Convergence.
\end{IEEEkeywords}

\section{Introduction} \label{sec:introduction}

In the field of convex optimization, first order methods (FOM) are a widespread class of optimization algorithms which only require evaluations of the objective function and its gradient \cite{Nesterov_S_18, Beck_SIAM_17}. Some examples of these methods include: gradient descent \cite{Nesterov_S_18}, ISTA \cite{Beck09} and ADMM \cite{boyd2011distributed}. A subclass of FOM are the \textit{accelerated} first order methods (AFOM), which are characterised by providing a convergence rate \textit{O}$(1/k^2)$ in terms of the objective function value \cite{Nesterov83}.
Some noteworthy examples are: Nesterov's fast gradient method \cite{Nesterov83}, FISTA \cite{Beck09}, accelerated ADMM \cite{Pejcic_ECC_16}, \cite{Zheng_FADMM_17}, fast Douglas-Rachford splitting~\cite{patrinos2014douglas}, and fast AMA (FAMA) \cite[\S 5]{Goldstein_SJIS_2014}.

The use of AFMOs in the field of control is a heavily researched topic, especially in the field of model predictive control (MPC), as evidenced by the following examples:
\cite{Pu_IFAC_14}, which employs FAMA on a condensed MPC optimization problem; \cite{Alamir:13} and \cite{Richter_TAC_2011}, which consider Nesterov's fast gradient method; \cite{Stathopoulos_TAC_2016}, where the infinite horizon constrained LQR problem is solved using an accelerated dual proximal method; \cite{Patrinos14}, which uses the accelerated dual gradient-projection algorithm; \cite{Kogel_ECC_11} and \cite{Nedelcu_JCO_2014}, which use the fast gradient method along with the augmented Lagrangian method; and \cite{Krupa:PLC:2020}, where the FISTA algorithm is employed.
AFOMs are also used in other areas closely related to the field of control, such as to find the solution of Lasso problems \cite{Tao_JO_2016}, which are employed, for instance, in sparse system identification \cite{Brunton_IFAC_2016}.

A drawback of AFOMs is that they often suffer from oscillating behaviour that slows them down \cite{Donoghue:13}.
In order to mitigate this, restart schemes have been proposed in the literature, which have been shown to improve the convergence in a practical setting by suppressing the oscillatory behaviour.
In a restart scheme, the AFOM is stopped when a certain criterion is met and then restarted using the last value provided by the algorithm as the new initial condition.

Several restart schemes have been proposed in the literature, including the following, where there are three main aspects to consider: \textit{(i)} if they are generally implementable in practice, \textit{(ii)} if they guarantee linear convergence, and \textit{(iii)}, if so, under what assumptions.
In \cite{Donoghue:13} the authors propose two simple heuristic schemes (the functional and gradient restarts) that work well in practice but lack, for the most part, linear convergence guarantees.
The results of this article were extended in \cite{Giselsson:14CDC}, but the conditions for linear convergence are still restrictive.
In \cite[\S 5.2.2]{Necoara:18}, the authors propose a restart scheme, that extends the scheme from \cite[\S 5.1]{Nesterov13}, for optimization problems satisfying a \textit{quadratic functional growth} (QFG) condition \cite[Definition 4]{Necoara:18}, which can be viewed as a relaxation of strong convexity.
This scheme, which guarantees linear convergence, restarts the AFOM after a fixed number of iterations, resulting in a very simple implementation.
However, its drawback is that it requires knowledge of parameters of the optimization problem that are generally hard to obtain, such as the parameter that characterizes the QFG condition.
A restart scheme for FISTA that also guarantees linear convergence for optimization problems satisfying the QFG condition is presented in  \cite{Fercoq_JNA_2019}.
The scheme requires an initial estimation of the QFG parameter, thus not requiring its exact value.
However, we find that it is rather sensitive to this initial guess.
Finally, the authors proposed in \cite{AlamoECC:19} and \cite{AlamoCDC:19} two restart schemes with linear convergence for optimization problems satisfying the QFG condition that do not require knowledge of hard to obtain parameters of the optimization problem nor estimations of them, including the QFG parameter.
However, both schemes are specific to FISTA.

This article presents a novel restart scheme for AFOMs that exhibits linear convergence for optimization problems satisfying a QFG condition, thus encompassing a broad class of non-necessarily strongly convex problems.
Furthermore, it does not require hard-to-attain information of the objective function, such as the QFG parameter.
We provide a theoretical upper bound on the number of iterations needed to achieve a desired accuracy and show that the obtained convergence rate is comparable to the one that could be obtained using the restart scheme from \cite[\S 5.2.2]{Necoara:18}, which is optimal for the class of AFOMs and optimization problems under consideration.

This paper extends the preliminary results presented for FISTA in the conference paper \cite{AlamoECC:19} by providing a restart scheme that is applicable to a broad class of AFOM algorithms and with an improved worst-case convergence rate.

The remainder of this article is structured as follows.
In Section \ref{sec:problem} we formally present the class of optimization problem and AFOM algorithms under consideration.
Section \ref{sec:fixed:rate} describes the optimal fixed-rate restart strategy from \cite{Necoara:18} and provides its iteration complexity for our class of AFOMs.
Section \ref{sec:Alg:Optimal} presents the novel implementable restart scheme with linear convergence.
In Section \ref{sec:Results} we compare the proposed scheme with some of the alternatives referenced above by applying them to solve an MPC problem using the FAMA algorithm from \cite{Pu_IFAC_14}.
We draw conclusions in Section \ref{sec:Conclusions}.

\subsubsection*{Notation}
Given a norm $\|\cdot\|$, we denote by $\|\cdot\|_*$ its dual norm: $\|x\|_*\doteq\sup\,\set{x^T z}{\|z\|\leq 1}$. The $\ell_1$-norm is denoted by $\|\cdot\|_1$. 
$\Z_+$ denotes the set of non-negative integers. The set of integer numbers from $i$ to $j$ is denoted with $\Z_i^j$, i.e. ${\Z_i^j \doteq \{i, i+1, \dots, j-1, j\}}$.
Euler's number is denoted by $e$, and the natural logarithm by ${\ln(\cdot)}$.
${\ceil{x}}$ denotes the smallest integer greater than or equal to $x$, and ${\floor{x}}$ the largest integer smaller than or equal to $x$.
The set of proper closed convex functions from $\R^n$ to $(-\infty,\infty]$ is denoted by~$\Propper$.
Given ${f \in \Propper}$, we denote by $\dom{f}$ its effective domain, that is, ${\dom{f}\doteq\set{x\in \R^n}{f(x)<\infty}}$.
Further notation is given in Notations \ref{notation:f} and \ref{notation:alg} in the following section.

\section{Problem statement}\label{sec:problem}

In this paper we are concerned with finding the solution of optimization problems given by
\begin{equation} \label{eq:OP}
    f^* = \min\limits_{x\in \R^n} f(x),
\end{equation}
which we assume are solvable and where ${f\in \Propper}$.

We use the following notation for the optimal set of \eqref{eq:OP}, the projection operation onto it, and the level sets of $f$.

\begin{notation} \label{notation:f}
    Given the solvable problem \eqref{eq:OP}:  
    \blista
        \item The optimal set is denoted by $\Omega_f$. That is, $$\Omega_f=\set{x\in \R^n}{f(x)=f^*}.$$

        \item For every $x\in \R^n$ we denote $\bx$ the closest element to $x$ in the optimal set $\Omega_f$ with respect to norm $\|\cdot\|$, i.e.
            \begin{equation*}
                \bx =\arg \min\limits_{z\in \Omega_f} \|x-z\|.
            \end{equation*}

        \item Given $\rho \in [0,\infty)$ we denote the level set
            \begin{equation*}
                V_f(\rho) = \set{x\in \R^n}{f(x)- f^* \leq \rho}.
            \end{equation*}
    \elista
\end{notation}

It is well known that if ${f\in \Propper}$ is strongly convex, then there exists $\mu>0$ such that
\begin{equation*}
    f(x)-f^* \geq \frac{\mu}{2}\| x-\bx\|^2, \; \forall x\in \dom{f}.
\end{equation*}
This inequality is called the quadratic functional growth (QFG) condition and it is satisfied, at least locally, for a large class of not necessarily strongly convex functions \cite{Necoara:18}, \cite{drusvyatskiy2018error}, \cite{Wang:14}.

Let us consider a fixed point algorithm $\cA$ that can be applied to solve \eqref{eq:OP}, i.e. given a initial point ${x_0 \in \dom{f}}$, algorithm $\cA$ generates a sequence $\{x_k\}$ with $k \geq 0$ such that ${\lim_{k \to \infty} f(x_k) = f^*}$. We use the following notation to refer to the iterates provided by algorithm $\cA$.

\begin{notation} \label{notation:alg}
    Suppose that the fixed point algorithm $\cA$ is applied to solve problem \eqref{eq:OP} using as initial condition $x_0$. Given the integer $k\geq 1$, we denote with $\cA(x_0,k)$ the vector in $\R^n$ corresponding to iteration $k$ of the algorithm. 
\end{notation}

The following assumption characterizes the class of optimization problems and AFOM algorithms we consider.

\begin{assumption} \label{assum:quad:and:convergence}
    We assume that:
    \blista
        \item For every $\rho>0$, $f\in \Propper$ satisfies the QFG condition
\begin{equation*}
    f(x_0) - f^* \geq \frac{\mu_\rho}{2} \| x_0-\bx_0\|^2, \; \forall x_0\in V_f(\rho)
\end{equation*}
for some $\mu_\rho > 0$.
        \item For every $x_0\in \dom{f}$, algorithm $\cA$ satisfies,
\begin{align}
    f(\cA(x_0,1))  &\leq f(x_0) - \frac{1}{2L_f} \|g(x_0)\|_*^2, \label{ineq:improvement} \\
    f(\cA(x_0,k))-f^* &\leq \frac{a_f}{(k+1)^2} \| x_0-\bx_0\|^2, \; \forall k\geq 1, \label{equ:convergence:rate:FOAM}
\end{align}
where $a_f>0$, $L_f>0$, and $g(\cdot)$ is a gradient operator satisfying $g(x)=0\Leftrightarrow x\in \Omega_f$. \label{assum:quad:and:convergence:fast}
        \item We denote $\bn_\rho \doteq \max \left\{\fracg{1}{2},\sqrt{\fracg{2a_f}{\mu_\rho}} \right\}$. \label{assum:quad:and:convergence:n}
    \elista
\end{assumption}

The conditions listed in Assumption \ref{assum:quad:and:convergence} are expressed in very general terms to be able to account for a broad class of optimization problems and AFOMs.
Let us start by providing some insight and examples of the conditions and terms listed in the assumption.
Assumption \ref{assum:quad:and:convergence}.\textit{(i)} establishes a local QFG condition on the objective function, which encompasses a broad class of non-necessarily strongly convex function (see \cite[Fig. 1]{Necoara:18}).
We refer the reader to \cite{Necoara:18}, \cite{drusvyatskiy2018error} and \cite{Wang:14} for examples of functions satisfying this condition, including the case
$f(x) = h(Ex)+c\T x+\Id_\cX(x)$, where $h:\R^m\to \R$, is a smooth strictly convex function, $E\in \R^{m\times n}$ and $\Id_\cX$ is the indicator function of a polyhedral set $\cX$, which encompasses a large family of optimization problems.
Assumption \ref{assum:quad:and:convergence}.\textit{(ii)} is written in terms of the constants $L_f$ and $a_f$, which will depend both on the AFOM $\cA$ being used and on the structure of $f$, and in terms of an operator $g: \R^n\to\R^n$, which plays the role of the \textit{gradient} operator of $\cA$.
For instance, in the FISTA algorithm applied to $f = h + \Psi$, where $h$ is an $L$-smooth differentiable convex function and $\Psi$ is a (possibly non-smooth) convex function, we have that $L_f = L$, $a_f = 2 L$ and $g$ is the composite gradient mapping operator \cite{AlamoECC:19}.
Condition \eqref{equ:convergence:rate:FOAM} is satisfied by most AFOMs \cite{Nesterov_S_18}, \cite{Beck_SIAM_17}, including the ones we list in the introduction, although under varying assumptions.
Condition \eqref{ineq:improvement} is also satisfied by most AFOMs because the first iteration is often the result of the application of a proximal (or composite) gradient mapping operator.
If this is not the case, then it can be easily enforced by taking this operator as the first step of the algorithm.

In conclusion, Assumption \ref{assum:quad:and:convergence} is satisfied by a broad family of AFOMs and optimization problems of interest in the field of control, including the AFOMs listed in the introduction and optimization problems such as QPs, Lasso or those that arise from many MPC formulations.

We now present a property on the iterates of $\cA$ which serves as the basis for the development and convergence analysis of the optimization schemes of the following sections. An equivalent result can be found in \cite[Subsection 5.2.2]{Necoara:18}.

\begin{property} \label{prop:k:convergence:in:n}
Suppose that Assumption \ref{assum:quad:and:convergence} holds. Then, for every $x_0\in V_f(\rho)$, 
\begin{equation} \label{equ:conv:quad:ff}
    f(\cA(x_0,k))-f^* \leq \left(\fracg{\bn_\rho}{k+1}\right)^2(f(x_0)-f^*),\;\forall k\geq 1.
\end{equation}
\end{property}

\begin{proof}

Denote $f_0 \doteq f(x_0)$, $f_k \doteq f(\cA(x_0,k))$, $\forall k\geq1$. Then, 
 \begin{align*}
     f_k -f^* &\leq \frac{a_f}{(k+1)^2}\| x_0-\bx_0\|^2 \leq \frac{2a_f}{\mu_\rho(k+1)^2} (f_0-f^*) \\
     &\leq \frac{\bn_\rho^2}{(k+1)^2} (f_0-f^*). \qedhere
 \end{align*}

\end{proof}

\section{Optimal fixed-rate restart scheme}\label{sec:fixed:rate}

This section describes the optimal fixed restart scheme presented in \cite[\S 5.2.2]{Necoara:18}, in which $\cA$ is restarted each time the iteration counter attains an optimal fixed number of iterations. We analyze, under Assumption \ref{assum:quad:and:convergence}, its  iteration complexity.

A fixed-rate restart scheme takes the recursion 
\begin{equation} \label{eq:optimal:fixed:rate:recursion}
    v_{j+1} = \cA(v_{j},n),\; j\geq 0,
\end{equation}
where $n\geq 1$ is a fixed integer, starting at a given $v_0\in V_f(\rho)$.

Under Assumption \ref{assum:quad:and:convergence}, the sequence $\{f(v_j)\}_{j\geq 0}$ is non increasing and converges monotonically to $f^*$ if $n\geq\bn_{\rho}$ (see Property \ref{prop:k:convergence:in:n}). Given an accuracy parameter $\epsilon>0$, the following property states the number $M$ of restarts required to satisfy $f(v_{M-1})-f(v_{M})\leq \epsilon$, and shows that the bound on the total number of iterations of $\cA$ is minimized if $n$ is chosen equal to $\ceil{e\bn_\rho}$.
See also \cite[\S 5.2.2]{Necoara:18} for a similar result. 

\begin{property}[Optimal fixed-rate restart scheme] \label{prop:fixed:restart}
    Let Assumption \ref{assum:quad:and:convergence} hold. Given $v_0 \in V_f(\rho)$ and an integer $n$ satisfying $n > \bn_\rho$, consider the recursion \eqref{eq:optimal:fixed:rate:recursion}.
Then, given $\epsilon>0$:
\blista
    \item The inequality $f(v_{M-1})-f(v_{M}) \leq \epsilon$ is satisfied for every $M \geq \bar{M}$, where
\begin{equation} \label{eq:bar:M}
    \bar{M}\doteq 1+\frac{1}{2(\ln n- \ln \bn_\rho)} \ln\,\left(1+ \frac{f(v_0)-f^*}{\epsilon} \right).
\end{equation}
    \item If $n=\ceil{e\bn_\rho}$, the total number of iterations of $\cA$ required to attain $f(v_{j-1}) - f(v_j) \leq \epsilon$ is upper bounded by \label{prop:fixed:restart:iterations}
\begin{equation} \label{ineq:upper:bound:NF}
   \bar{N}_F^* \doteq  \ceil{e\bn_\rho} \ceil{1+ \frac{1}{2} \ln\,\left(1+ \frac{f(v_0)-f^*}{\epsilon} \right)}.
\end{equation} 
In this case, we call recursion \eqref{eq:optimal:fixed:rate:recursion} the \textit{optimal fixed-rate restart scheme}.
\elista
\end{property}

\begin{proof} \renewcommand{\qedsymbol}{}
    See Appendix \ref{app:proof:fixed:restart}.
\end{proof}

One of the key properties of the optimal fixed-rate restart scheme is that it recovers the linear optimal convergence rate provided by Nesterov's fast gradient method for strongly convex functions \cite{Necoara:18}, \cite[\S 2.2]{Nesterov04}.
That is, recalling that $\bn_\rho = \max\{1/2, \sqrt{2 a_f/\mu_\rho} \}$ we easily obtain from Property \ref{prop:fixed:restart}.\ref{prop:fixed:restart:iterations} that an $\epsilon$ accurate solution is obtained in
\begin{equation} \label{eq:optimal:convergence:order}
    \mathcal{O} \left( \bn_\rho \ln \left( \frac{f(v_0) - f^*}{\epsilon} \right) \right)
\end{equation}
iterations.
However, we note that this scheme is often non-implementable because the value of $\bn_\rho$ is generally not available.
Nevertheless, \eqref{ineq:upper:bound:NF} and \eqref{eq:optimal:convergence:order} are of interest because they provide the best theoretical convergence rate that can be obtained with a fixed-rate restart strategy.

\section{Proposed restart scheme} \label{sec:Alg:Optimal}

In this section we propose a novel restart scheme that does not require knowledge of $\bn_\rho$ and that attains a convergence rate similar to the one of the optimal fixed-rate restart strategy described in Section \ref{sec:fixed:rate}. 
We start by presenting Algorithm~\ref{Alg:Delayed}, which implements a performance-based exit condition of algorithm $\cA$. Algorithm \ref{Alg:Delayed} will then be used to derive the main result of this article: Algorithm \ref{alg:Opt:Alg:ERA}.

\begin{algorithm}[t]
    \DontPrintSemicolon
	\caption{Performance-based exit condition on $\cA$} \label{Alg:Delayed}
    \Prototype{$[z,m] = \cA_d(r,n)$}
    \Require{$r\in \dom{f}$, $n \in \R$}
	$x_0 \gets r$, $k \gets 0$\; \label{Ad:line:initialization}
	Initialize $\cA$ with $x_0$\;
    \Repeat{$k\geq n$ and $f(x_\ell)-f(x_k)\leq \fracg{1}{3}\left(f(x_0)-f(x_\ell)\right)$ \label{line:Alg:exit:condition}}{ 
		$k \gets k+1$\;
        $x_k \gets \bsis{cl} \cA(x_0,k) & \mbox{if}\; f(\cA(x_0,k))\leq f(x_{k-1}) \vspace*{0.5em} \\ x_{k-1} & \mbox{otherwise} \esis$\; \label{line:Alg:delayed:min}
		$\ell \gets \floor{\frac{k}{2}}$\;
    }
    \KwOut{$z \gets x_k$, $m \gets k$}
\end{algorithm}

\begin{algorithm}[t]
    \DontPrintSemicolon
	\caption{Optimal Algorithm based on $\cA_d$} \label{alg:Opt:Alg:ERA}
    \Prototype{$[z_{out}, j_{out}] = \cA_*(z_0)$}
    \Require{$z_0 \in \dom{f}$, $\epsilon >0$}
    $m_0 \gets 1$, $m_{-1} \gets 1$, $j \gets -1$\; \label{line:initialization}
    \Repeat{$f(z_{j}) - f(z_{j+1}) \leq \epsilon$ \label{line:exit:condition}}{ 
        $j \gets j+1$\;
        $s_j \gets \bsis{cc}  \sqrt{\fracg{f(z_{j-1})-f(z_{j})}{f(z_{j-2})-f(z_j)}} & \mbox{if} \; j\geq 2 \vspace*{0.5em}\\
        0 & \mbox{otherwise} \esis$\; \label{line:sj}
        $n_j \gets \max\{ m_j, 4s_jm_{j-1}\}$\; \label{line:nj}
        $[z_{j+1},m_{j+1}] \gets \cA_d(z_j, n_j)$\; \label{line:call:Ad}
    }
    \KwOut{$z_{out} \gets z_{j+1}$, $j_{out} \gets j$}
\end{algorithm}

Given an initial condition $x_0$ and a scalar $n$, which serves as a lower bound on the number of iterations, Algorithm \ref{Alg:Delayed} generates a sequence $\{x_k\}_{k\geq 0}$ that satisfies (see step \ref{line:Alg:delayed:min})
\begin{equation*}
    f(x_k) = \min \{f(x_{k-1}), f(\cA(x_0,k))\}, \forall k\geq 1.
\end{equation*}
Therefore,
\begin{equation} \label{equ:min:f:cA}
    f(x_k) = \min\limits_{i=0,\ldots,k} f(\cA(x_0,i)).
\end{equation}
The algorithm exits after $k \geq n$ iterations if the following inequality is satisfied (see step \ref{line:Alg:exit:condition}):
\begin{equation} \label{eq:exit:cond:Alg3}
 f(x_\ell) - f(x_k) \leq \frac{1}{3} \left( f(x_0) - f(x_\ell)\right),
\end{equation}
where $\ell=\floor{\frac{k}{2}}$. 
The outputs of the algorithm are $z\in \R^n$ and $m \in \Z$, where $z = x_m$ and $m \geq n$ is the number of iterations required to satisfy the exit condition \eqref{eq:exit:cond:Alg3}.

Intuitively, exit condition \eqref{eq:exit:cond:Alg3} detects a degradation in the performance of the iterations of $\cA$. Notice that at iteration $m$, the reduction corresponding to the last half of the iterations (from $\floor{\frac{m}{2}}$ to $m$) is no larger than one third of the reduction achieved in the first half of the iterations (from $0$ to $\floor{\frac{m}{2}}$). 
The constant $1/3$ was chosen because it minimized the upper bound on the maximum number of iterations that we show below in Theorem \ref{theo:conv:optimal:algorithm}.\ref{theo:conv:optimal:algorithm:iter:total}.

The following property characterizes the number of iterations required to attain the exit condition \eqref{eq:exit:cond:Alg3} of Algorithm \ref{Alg:Delayed}. This result is instrumental to prove the convergence results of Algorithm \ref{alg:Opt:Alg:ERA}.
\vspace*{-0.5em}
\begin{property}\label{property:Alg:Delayed}
Suppose that Assumption \ref{assum:quad:and:convergence} holds. Then, the output $[z,m]$ from the call $[z,m]=\cA_d(r,n)$ of Algorithm \ref{Alg:Delayed} satisfies, for every $r\in V_f(\rho)$:
\blista
\item $f(z)\leq f(r)- \fracg{1}{2L_f} \|g(r)\|_*^2$, \label{property:Alg:Delayed:grad}
\item $f(z)-f^*\leq \left(\fracg{\bn_\rho}{m+1}\right)^2(f(r)-f^*)$,
\item $n\in (0,\ceil{4\bn_\rho}] \implies m \in [n, \ceil{4 \bn_\rho}]$.
    \elista
\end{property}
\vspace*{-1em}
\begin{proof} \renewcommand{\qedsymbol}{}
    See Appendix \ref{app:proof:Alg:Delayed}.
\end{proof}

We now introduce the main contribution of the article: Algorithm \ref{alg:Opt:Alg:ERA}. This algorithm makes successive calls to Algorithm \ref{Alg:Delayed} (see step \ref{line:call:Ad}) using a minimum number of iterations $n_j$ that is determined by the past evolution of the iterates $z_j$ (see steps \ref{line:sj} and \ref{line:nj}). The main properties of the iterates of Algorithm \ref{alg:Opt:Alg:ERA} are given in the following property and theorem.
\vspace*{-0.5em}
\begin{property} \label{prop:optimal:algorithm}
    Suppose that Assumption \ref{assum:quad:and:convergence} holds and consider Algorithm \ref{alg:Opt:Alg:ERA} for a given initial condition $z_0\in V_f(\rho)$ and accuracy parameter $\epsilon>0$. Then:
\blista
    \item Property \ref{property:Alg:Delayed} can be applied to the iterates of Algorithm \ref{alg:Opt:Alg:ERA} (i.e., taking $r \equiv z_j$, $n \equiv n_j$ $z \equiv z_{j+1}$ and $m \equiv m_{j+1}$). \label{prop:optimal:algorithm:delayed}
    \item The sequence $\{m_j\}$ produced is non-decreasing. In particular, \label{prop:optimal:algorithm:mj}
        \begin{equation} \label{ineq:m:n:m}
            m_j \leq n_j \leq m_{j+1}, \; \forall j \in \Z_0^{j_{out}}.
        \end{equation}
    \item The sequence $\{s_j\}$ satisfies $s_j \in (0, 1]$, $\forall j \in \Z_2^{j_{out}}$. \label{prop:optimal:algorithm:sj}
\elista
\end{property}
\vspace*{-1em}
\begin{proof} \renewcommand{\qedsymbol}{}
    See Appendix \ref{app:proof:theorem}.
\end{proof}
\vspace*{-0.5em}
\begin{theorem}\label{theo:conv:optimal:algorithm}
    Suppose that Assumption \ref{assum:quad:and:convergence} holds and consider Algorithm \ref{alg:Opt:Alg:ERA} for a given initial condition $z_0\in V_f(\rho)$ and accuracy parameter $\epsilon>0$. Then:
\blista
    \item The number of calls to $\cA_d$ (step \ref{line:call:Ad}) is bounded. That is, $j_{out}$ is finite. \label{theo:conv:optimal:algorithm:finite}
    \item The number of iterations of $\cA$ at each call of $\cA_d$ (step \ref{line:call:Ad}) is upper bounded by $\ceil{4 \bn_\rho}$. That is, \label{theo:conv:optimal:algorithm:iter:Ad}
        \begin{equation} \label{ineq:bound:on:mplus}
            m_{j+1} \leq \ceil{4 \bn_\rho}, \; \forall j \in \Z_0^{j_{out}}.
        \end{equation}
    \item The total number of iterations of $\cA$ performed by a call to Algorithm \ref{alg:Opt:Alg:ERA}, which we denote by $N_\cA$, is upper-bounded by $N_\cA \leq \bar{N}_\cA$, where \label{theo:conv:optimal:algorithm:iter:total}
    \begin{equation*}
        \bar{N}_\cA \doteq \frac{e \ceil{4 \bn_\rho}}{2}\ceil{5+ \frac{1}{\ln 15 } \ln\left(1+ \frac{f(z_0)-f^*}{\epsilon} \right) }.
    \end{equation*}
\elista
\end{theorem}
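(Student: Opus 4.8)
The plan is to prove the three assertions in the order: the per-call bound~\eqref{ineq:bound:on:mplus}, then finiteness of $j_{out}$, then the total-iteration bound, since the first feeds the other two. To obtain~\eqref{ineq:bound:on:mplus}, observe first that by Property~\ref{prop:optimal:algorithm}.\ref{prop:optimal:algorithm:delayed} every call to $\cA_d$ in step~\ref{line:call:Ad} inherits Property~\ref{property:Alg:Delayed}, so by Property~\ref{property:Alg:Delayed}.\textit{(iii)} it suffices to show that $n_j \in (0,\ceil{4\bn_\rho}]$ for every $j \in \Z_0^{j_{out}}$, which then forces $m_{j+1} \in [n_j,\ceil{4\bn_\rho}]$. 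I would prove $n_j \le \ceil{4\bn_\rho}$ by induction on $j$. The cases $j = 0,1$ are immediate: $s_0 = s_1 = 0$ (step~\ref{line:sj}), so $n_0 = m_0 = 1 \le \ceil{4\bn_\rho}$ (using $\bn_\rho \ge 1/2$), and then $n_1 = m_1 \le \ceil{4\bn_\rho}$ by Property~\ref{property:Alg:Delayed}.\textit{(iii)}. For the step $j \ge 2$, since $n_j = \max\{m_j,4s_jm_{j-1}\}$ and $m_j \le \ceil{4\bn_\rho}$ by the inductive hypothesis, it remains to prove the key estimate $s_j m_{j-1} < \bn_\rho$.

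For that estimate, put $\Delta_i \doteq f(z_i)-f^*$; by Property~\ref{property:Alg:Delayed}.\textit{(i)} this sequence is non-increasing, and the relevant $\Delta_i$ are strictly positive before termination (if some $\Delta_i$ vanished the loop would already have exited), while $s_j \in (0,1]$ by Property~\ref{prop:optimal:algorithm}.\ref{prop:optimal:algorithm:sj}. Writing $\lambda \doteq \Delta_{j-1}/\Delta_{j-2} \in (0,1]$ and $\nu \doteq \Delta_j/\Delta_{j-1} \in [0,1)$, step~\ref{line:sj} gives $s_j^2 = (\Delta_{j-1}-\Delta_j)/(\Delta_{j-2}-\Delta_j) = \lambda(1-\nu)/(1-\lambda\nu)$; since $\lambda \le 1$ implies $1-\lambda\nu \ge 1-\nu > 0$, this yields $s_j^2 \le \lambda$. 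Applying Property~\ref{property:Alg:Delayed}.\textit{(ii)} to the call $[z_{j-1},m_{j-1}] = \cA_d(z_{j-2},n_{j-2})$ gives $\lambda \le (\bn_\rho/(m_{j-1}+1))^2$, hence $s_j^2 m_{j-1}^2 \le \lambda m_{j-1}^2 \le \bn_\rho^2\,m_{j-1}^2/(m_{j-1}+1)^2 < \bn_\rho^2$. Therefore $n_j \le \max\{m_j,4\bn_\rho\} \le \ceil{4\bn_\rho}$, which closes the induction and proves~\eqref{ineq:bound:on:mplus}.

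Finiteness of $j_{out}$ then follows quickly: $\{\Delta_i\}$ is non-increasing and bounded below by $0$ (since $z_i \in \dom{f}$ and $f^* = \min f$), hence convergent, so $f(z_j)-f(z_{j+1}) = \Delta_j - \Delta_{j+1} \to 0$ and the exit test of step~\ref{line:exit:condition} holds for all $j$ large. For the total-iteration bound, \eqref{ineq:bound:on:mplus} gives $N_\cA = \sum_{j=0}^{j_{out}} m_{j+1} \le (j_{out}+1)\ceil{4\bn_\rho}$, so the task is to bound $j_{out}+1$ by $\tfrac{e}{2}\ceil{5 + (\ln 15)^{-1}\ln(1+(f(z_0)-f^*)/\epsilon)}$. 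I would track the reductions $d_j \doteq \Delta_{j-1}-\Delta_j$ and show they eventually contract by a factor at least $15$ per outer iteration. With the identity $s_j^2 = d_j/(d_{j-1}+d_j)$ from the previous paragraph, $s_j \le 1/4$ is exactly $d_j \le d_{j-1}/15$; and once the non-decreasing sequence $\{m_j\}$ (Property~\ref{prop:optimal:algorithm}.\ref{prop:optimal:algorithm:mj}) has stabilized, say $m_{j-1}=m_j=m_{j+1}$ --- which by~\eqref{ineq:m:n:m} forces $n_j = m_j$ --- step~\ref{line:nj} gives $4s_jm_{j-1} \le m_j$, i.e.\ $s_j \le 1/4$: this is the $\ln 15$ regime. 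Before $\{m_j\}$ stabilizes it can only increase, and being capped at $\ceil{4\bn_\rho}$ by~\eqref{ineq:bound:on:mplus} the transient steps contribute, in aggregate, only $O(\ceil{4\bn_\rho})$ iterations of $\cA$ (a bounded, essentially geometric, sum of $m$'s). Putting the two together, $d_j$ drops below $\epsilon$ within a fixed constant plus $(\ln 15)^{-1}\ln((f(z_0)-f^*)/\epsilon)$ outer iterations, and the exact values $e/2$, the additive $5$, and $\ln 15$ emerge after a careful treatment of the transient and of the ceilings.

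The step I expect to be the main obstacle is precisely this last one: cleanly separating the transient regime (where $\{m_j\}$ is still climbing and $d_j$ need not contract) from the steady one, proving that the transient costs only $O(\ceil{4\bn_\rho})$ iterations of $\cA$ in total, and then extracting the precise constants appearing in $\bar N_\cA$. Finiteness of $j_{out}$ and the per-call bound~\eqref{ineq:bound:on:mplus} are comparatively routine once the estimate $s_j m_{j-1} < \bn_\rho$ is in hand.
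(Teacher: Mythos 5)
Your arguments for claims \textit{(i)} and \textit{(ii)} are correct and essentially coincide with the paper's: your $\lambda$/$\nu$ computation yielding $s_j^2\leq (f(z_{j-1})-f^*)/(f(z_{j-2})-f^*)\leq\left(\bn_\rho/(m_{j-1}+1)\right)^2$, hence $s_jm_{j-1}<\bn_\rho$ and $n_j\leq\ceil{4\bn_\rho}$, is exactly the paper's inductive step written in different variables, and finiteness of $j_{out}$ follows either from your monotone-convergence argument or from the paper's explicit count $(T+1)\epsilon<f(z_0)-f^*\leq\rho$.

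The genuine gap is in claim \textit{(iii)}, precisely where you anticipated it, and it is not merely a matter of ``extracting constants''. First, your reduction --- ``the task is to bound $j_{out}+1$ by $\frac{e}{2}\ceil{5+\cdots}$'' --- aims at a statement that is stronger than the theorem and that nothing in your argument (or in the paper) supports: the sequence $\{m_j\}$ is only non-decreasing and capped at $\ceil{4\bn_\rho}$, and the paper's own analysis guarantees no more than a growth factor $\sqrt{15}$ in $m$ per window of $D\doteq\ceil{5+\frac{1}{\ln 15}\ln(1+\frac{f(z_0)-f^*}{\epsilon})}$ outer iterations, which is consistent with $j_{out}$ being of order $D\log_{\sqrt{15}}\ceil{4\bn_\rho}\gg\frac{e}{2}D$. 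The theorem's bound is obtained not by bounding $j_{out}$ but by exploiting that early calls are cheap: $N_\cA\leq D\sum_i m_{d+1+iD}$, and the window-boundary values form a geometric sequence with ratio at least $\sqrt{15}$ capped at $\ceil{4\bn_\rho}$, giving the factor $\sqrt{15}/(\sqrt{15}-1)\leq e/2$. Second, your contraction mechanism --- wait until $m_{j-1}=m_j=m_{j+1}$ to conclude $s_j\leq 1/4$ and $d_j\leq d_{j-1}/15$, dismissing the preceding ``transient'' as a bounded geometric sum of $m$'s --- fails: $\{m_j\}$ may creep upward by one unit at a time (or alternate, never producing three equal consecutive terms) without any geometric growth, and in isolation such a transient could cost on the order of $\ceil{4\bn_\rho}^2$ inner iterations, destroying the claimed rate. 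What rescues the bound is that slow growth of $m_j$ forces $s_j$ to be small, which in turn forces $d_j$ to contract, so the two regimes cannot simultaneously persist; the missing device that makes this joint accounting precise is Lemma \ref{lemma:lower:bound:expression}, i.e.\ $\left(\frac{1}{s^2}-1\right)\max\{1,(4s)^4\}\geq 15$ on $(0,\frac{\sqrt{15}}{4}]$. Summed in logarithm over a window in which $m$ fails to grow by $\sqrt{15}$, the $\max\{1,(4s_j)^4\}$ factors telescope (via $m_{j+1}\geq m_{j-1}\max\{1,4s_j\}$) to at most $4\ln 15$ and the $\frac{1}{s_j^2}-1$ factors telescope (via $\prod(\frac{1}{s_j^2}-1)=\frac{d_{\ell+1}}{d_{\ell+T}}$) to at most $\ln(1+\frac{f(z_0)-f^*}{\epsilon})$, forcing $(T-1)\ln 15$ below their sum and hence the window length $D$. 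Without this combined inequality the constants $5$, $\ln 15$ and $e/2$ --- and indeed the $\mathcal{O}(\bn_\rho\ln(1/\epsilon))$ rate itself --- do not follow from your outline.
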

\vspace*{-1em}
\begin{proof} \renewcommand{\qedsymbol}{}
    See Appendix \ref{app:proof:theorem}.
\end{proof}

\begin{remark} \label{rem:exit:condition}
From Property \ref{prop:optimal:algorithm}.\ref{prop:optimal:algorithm:delayed}, we have that we can rearrange Property \ref{property:Alg:Delayed}.\ref{property:Alg:Delayed:grad} to read as
\begin{equation*}
    \|g(z_j)\|_*^2 \leq 2L_f(f(z_j)-f(z_{j+1})).
\end{equation*}
Therefore, the exit condition $f(z_j) - f(z_{j+1}) \leq \epsilon$ implies $\| g(z_j) \|_*^2 \leq 2 L_f \epsilon$. Since, as per Assumption \ref{assum:quad:and:convergence}.\ref{assum:quad:and:convergence:fast}, $g(z_j)$ serves to characterize the optimality of $z_j$, we conclude that the exit condition of Algorithm \ref{alg:Opt:Alg:ERA} also serves to characterize the optimality of $z_{j + 1}$.
This means that the exit condition could be replaced by $\| g(z_j) \|_* \leq \tilde \epsilon$, where $\tilde \epsilon > 0$.
In this case, the upper bound on the number of iterations given in Theorem \ref{theo:conv:optimal:algorithm}.\ref{theo:conv:optimal:algorithm:iter:total} would be the same but replacing $\epsilon$ with $\tilde \epsilon/(2 L_f)$.
\end{remark}

Note that Theorem \ref{theo:conv:optimal:algorithm}.\ref{theo:conv:optimal:algorithm:iter:total} shows that the proposed algorithm attains the optimal linear convergence rate of the optimal fixed-rate restart scheme, in the sense that an $\epsilon$ accurate solution is obtained in \eqref{eq:optimal:convergence:order} iterations.
This is an important result, since we recover the optimal convergence rate for our class of optimization problems using AFOMs.

Comparing the upper bound provided in Theorem \ref{theo:conv:optimal:algorithm}.\ref{theo:conv:optimal:algorithm:iter:total} with the upper bound $\bar{N}_F^*$ \eqref{ineq:upper:bound:NF} of the optimal fixed-rate restart scheme presented in Section \ref{sec:fixed:rate}, we have
\begin{equation*}
\frac{\bar{N}_\cA}{ \bar{N}_F^*} =  \frac{e \ceil{4 \bn_\rho}\ceil{5+ \frac{1}{\ln 15 } \ln\left(1+ \fracg{f(z_0)-f^*}{\epsilon} \right) }}{2\ceil{e \bn_\rho}\ceil{1+ \frac{1}{2 } \ln\left(1+ \fracg{f(z_0)-f^*}{\epsilon} \right)}},
\end{equation*}
from where we obtain that
\begin{eqnarray*} \lim\limits_{\epsilon \to 0} \frac{\bar{N}_A}{\bar{N}_F^*} &=& \frac{e\ceil{4\bn_\rho}}{\ceil{e\bn_\rho}\ln 15} \leq  
\frac{e(4\bn_\rho+1)}{e\bn_\rho \ln 15} \\
&=& \frac{4}{\ln 15} + \frac{1}{\bn_\rho \ln 15}\leq \frac{3}{2}\left(1+ \frac{1}{4 \bn_\rho} \right).
\end{eqnarray*}
We conclude that the worst case complexity of (the implementable) Algorithm \ref{alg:Opt:Alg:ERA} is comparable to the (generally) non implementable optimal fixed-rate restart scheme (approximately 50\% more iterations of $\cA$ when $\epsilon$ tends to zero). 

\section{Numerical results} \label{sec:Results}

We compare Algorithm \ref{alg:Opt:Alg:ERA} with other restart schemes of the literature by applying them to the FAMA algorithm for MPC from \cite{Pu_IFAC_14}, where we consider the MPC formulation \cite[Eq. (2)]{Krupa_arXiv_ellipMPC_21_v2} but without its terminal constraint (2f). This MPC formulation can be posed as \cite[Problem 2.1]{Pu_IFAC_14}, which can therefore be solved using the FAMA algorithm \cite[Alg. 1]{Pu_IFAC_14}.
As stated in \cite[\S 3.2]{Pu_IFAC_14}, this algorithm is equivalent to applying FISTA to the dual problem of \cite[Problem 2.1]{Pu_IFAC_14}.
Therefore, the dual objective function value $D(\lambda)$ (see \cite[\S 3.2]{Pu_IFAC_14}) and its gradient $\nabla D(\lambda)$, where $\lambda$ are the dual variables, satisfy Assumption \ref{assum:quad:and:convergence}, with $\lambda_k \equiv x_k$, $D \equiv f$, $\nabla D \equiv g$, $L_f$ is equal to the expression $\rho(C)/\sigma_f$ described in \cite[\S 2]{Pu_IFAC_14} and $a_f = 2 L_f$.

We consider the system described in \cite[\S 3]{Krupa_arXiv_ellipMPC_21_v2}, which consists of three objects connected by springs where external forces can be applied to the two outer-most objects.
We use the exact same setup as in \cite[\S 3]{Krupa_arXiv_ellipMPC_21_v2} with the exception of the cost function matrix $T$, which we take as the solution of the discrete algebraic Ricatti equation, as is often the case in MPC, and the state constraints, which we do not enforce.
We perform the preconditioning procedure described in \cite[\S 4]{Pu_IFAC_14} on the resulting MPC optimization problem.

We solve the MPC's optimization problem for $1000$ randomly generated system states, where the positions of each object is obtained from a uniform distribution on the interval $[0, 4]$ dm and the velocities from a uniform distribution on the interval $[-0.5, 0.5]$ m/s.
We solve each problem using the FAMA algorithm \cite[Alg. 1]{Pu_IFAC_14} with different restart schemes:
\blista
    \item \textbf{\textit{Functional}:} The heuristic restart scheme proposed in \cite{Donoghue:13} that uses restart condition ${D(\lambda_{k+1}) \leq D(\lambda_{k})}$.
    \item \textbf{\textit{Gradient}:} The heuristic restart scheme proposed in \cite{Donoghue:13} that uses restart condition ${\langle \nabla D(\lambda_k), \lambda_k - \lambda_{k+1} \rangle \geq 0}$.
    \item \textbf{\textit{Optimal}:} The restart scheme proposed in \cite[\S 5.2.2]{Necoara:18}, which is typically non-implementable.
        In order to include for comparison with the proposed scheme, we implement it using the method described at the end of the referenced subsection that requires knowing $D(\lambda^*)$, which we compute using FAMA with a very small exit tolerance.
    \item \textbf{\textit{GLCR}:} The restart FISTA algorithm with linear convergence proposed in \cite[Alg. 2]{AlamoECC:19}.
    \item \textbf{\textit{GBRF}:} The gradient-based restart FISTA algorithm proposed in \cite[Alg. 2]{AlamoCDC:19}.
    \item \textbf{\textit{Adaptive}:}  The adaptive scheme for FISTA from \cite{Fercoq_JNA_2019}. We take the initial guess of the QFG parameter as $10^{-5}$.
\elista
We also use the proposed restart scheme (Algorithm \ref{alg:Opt:Alg:ERA}) and the non-restarted FAMA.

Table \ref{tab:iters} shows the average, median, maximum and minimum number of iterations of each scheme for the $1000$ tests.
The results of the table are obtained by terminating FAMA when the iterate $\vv{u}_k$ of the primal problem (see \cite[Alg. 1]{Pu_IFAC_14}) satisfies $\| \vv{u}_k - \vv{u}^*\|_2/\|\vv{u}^*\|_2 \leq 10^{-5}$, where $\| \cdot \|_2$ stands for the standard Euclidean norm and the optimal solution $\vv{u}^*$ of the primal problem is obtained using the \texttt{quadprog} solver from Matlab.
We use this exit condition to provide a fair comparison between the different approaches.
Figure~\ref{fig:evolution} shows the evolution of $\| \vv{u}_k - \vv{u}^*\|_2/\|\vv{u}^*\|_2$ for each restart scheme when taking the system state as the origin.
The results indicate that the restart schemes tend to reduce the number of iterations required to obtain a solution of the MPC's optimization problem when compared with the non-restarted variant, although this is not always the case.

{\renewcommand{\arraystretch}{1.4}%
\begin{table}[t]
    \centering
    \caption{Comparison between restart schemes.} 
    \label{tab:iters}
    \setlength\tabcolsep{5pt} 
        \begin{tabular}{|c|c|c|c|c|}
        \hline
        Scheme & Avg. Iter. & Med. Iter. & Max. Iter. & Min. Iter \\\hline\hline
        No restart & 6262.7 & 5124.5 & 26659 & 48 \\\hline
        Alg. \ref{alg:Opt:Alg:ERA} & 1115.9 & 1080.5 & 2902 & 23 \\\hline
        \textit{GLCR} & 1103.0 & 1070.5 & 2963 & 29 \\\hline
        \textit{GBRF} & 2184.0 & 2132.5 & 5991 & 48 \\\hline
        \textit{Optimal} & 2135.3 & 2095.0 & 5893 & 53 \\\hline
        \textit{Functional} & 845.2 & 801.0 & 4134 & 26 \\\hline
        \textit{Gradient} & 835.5 & 801.5 & 2147 & 26 \\\hline
        \textit{Adaptive} & 1858.1  & 1842.5 & 3827 & 48 \\\hline
        \end{tabular}
\end{table}}

\begin{figure}[t]
    \centering
    \includegraphics[width=\columnwidth]{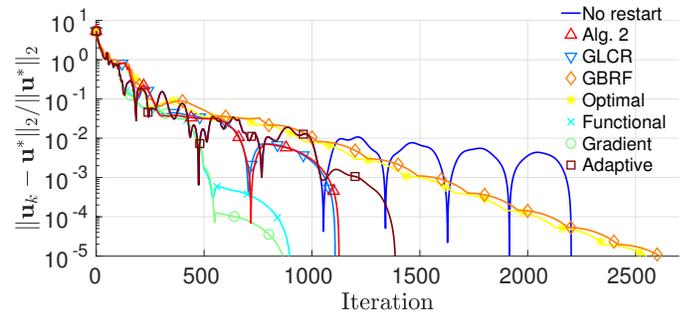}
    \caption{Evolution of distance to optimality for each scheme.}
    \label{fig:evolution}
\end{figure}

Additional numerical results can be found in \cite[\S 3.3]{Krupa_Thesis_21}, where the above restart schemes are applied to solve random QP and Lasso problems with varying condition numbers.

\section{Conclusions} \label{sec:Conclusions}

We propose a restart scheme applicable to a broad class of AFOMs that does not require knowledge of hard-to-obtain parameters of the optimization problem and still retains a linear convergence rate similar to the optimal one for optimization problems satisfying a QFG condition, i.e., its worst case complexity is similar to the best one that can be obtained if the parameters characterizing the convergence of the AFOM were known. 
The scheme is based on a performance-based exit condition that detects the degradation of the convergence of the AFOM and which we show is attained in a finite number of iterations.
The numerical results indicate that the proposed algorithm is comparable, in practical terms, with other restart schemes of the literature.
In our numerical results we find that the application of restart schemes may be desirable in many situations, since the non-restarted variant can require an excessive number or iterations.

\begin{appendix}
 

\subsection{Proof of Property \ref{prop:fixed:restart}} \label{app:proof:fixed:restart}

\noindent Suppose that the integer $M$ is such that the inequality $ f(v_{M-1})-f(v_{M})>\epsilon$ is satisfied.
From Property \ref{prop:k:convergence:in:n} we have
\begin{equation*}
f(v_{j+1}) - f^* \leq  \left(\frac{\bn_\rho}{n}\right)^2 (f(v_{j})-f^*),\; \forall j\geq 0.
\end{equation*}
Using this inequality in a recursive manner we obtain
\begin{align*}
    \epsilon &< f(v_{M-1})-f(v_{M}) \leq f(v_{M-1})-f^* \\
    &\leq \left( \frac{\bn_\rho}{n} \right)^{2(M-1)} (f(v_0)-f^*). 
\end{align*} 
This leads to 
\begin{equation} \label{eq:proof:fixed:restart:M}
    M < 1 + \frac{1}{2(\ln n-\ln \bn_{\rho})} \ln \left(\frac{f(v_0)-f^*}{\epsilon}\right) < \bar{M}.
\end{equation}
Thus, we conclude that if $M$ does not satisfy \eqref{eq:proof:fixed:restart:M}, then $f(v_{M-1})-f(v_{M}) \leq \epsilon$. 
This proves the first claim.

Given $\epsilon>0$, $v_0 \in V_f(\rho)$ and $n > \bn_\rho$, denote $S\geq 0$ the smallest number of restarts required to satisfy the condition $f(v_{S-1})-f(v_{S})\leq \epsilon$. We infer from the first claim of the property that
\begin{equation*}
    S \leq \ceil{1+\frac{1}{2(\ln n- \ln \bn_\rho)} \ln\,\left( 1+\frac{f(v_0)-f^*}{\epsilon} \right)}.
\end{equation*}
Since each restart requires $n$ iterations of $\cA$, we conclude that $N_F(n)$, the total number of iterations of $\cA$, is equal to $nS$. Thus, 
\begin{equation}\label{equ:bound:N:F:n} 
    \moveEq{-10}  N_F(n) {\leq} n \ceil{1{+}\frac{1}{2(\ln n- \ln \bn_\rho)} \ln\,\left( 1{+}\frac{f(v_0)-f^*}{\epsilon} \right)}.
\end{equation}
Simple calculus yields that the value that minimizes
$\frac{n}{\ln n-\ln \bn_\rho}$
is $n^* = e\bn_\rho$. Since $n$ has to be a positive integer, we choose the fixed restart rate given by $n=\ceil{e\bn_\rho}$. Introducing this value in the bound \eqref{equ:bound:N:F:n} we finally obtain
\begin{equation*}
    N_F(\ceil{e\bn_\rho}) \leq  \ceil{e\bn_\rho} \ceil{ 1+ \frac{1}{2} \ln\,\left(1+ \frac{f(v_0)-f^*}{\epsilon} \right)}. \qed
\end{equation*}

\subsection{Proof of Property \ref{property:Alg:Delayed}} \label{app:proof:Alg:Delayed}

\noindent From \eqref{equ:min:f:cA} and Assumption \ref{assum:quad:and:convergence} we have
\begin{align*}
    f(z) &= f(x_m) \becauseof[=]{\eqref{equ:min:f:cA}} \min\limits_{i=0,\ldots,m} f(\cA(x_0,i)) \\
    &\leq f(\cA(x_0,1)) \becauseof[\leq]{\eqref{ineq:improvement}} f(x_0) -\frac{1}{2L_f} \| g(x_0)\|_*^2.
\end{align*}
The first claim now follows directly from $x_0=r$. In view of Property \ref{prop:k:convergence:in:n} we have, for every $k\in\Z_1^m$, 
\begin{equation} \label{ineq:fk:in:Alg:delayed}
\moveEq{-10} f(x_k){-}f^* \becauseof[\leq]{\eqref{equ:min:f:cA}} f(\cA(x_0,k)) {-}f^* \becauseof[\leq]{\eqref{equ:conv:quad:ff}} \Big(\frac{\bn_\rho}{k{+}1}\Big)^2 (f(x_0){-}f^*). 
\end{equation}

The second claim now follows from $x_m=z$ and $x_0=r$. 
The inequality $n\leq m$ is trivially satisfied from step \ref{line:Alg:exit:condition}. Thus, in order to conclude the proof we show that inequality \eqref{eq:exit:cond:Alg3} is satisfied for $\hat{k}=\ceil{4\bn_\rho}$ and $\hat{\ell}=\floor{\frac{\ceil{4\bn_\rho}}{2}}\geq \floor{2\bn_\rho}\geq 1$ (where this last inequality follows from Assumption \ref{assum:quad:and:convergence}.\ref{assum:quad:and:convergence:n}, which states that $\bn_\rho \geq 1/2$). 
\begin{align*}
    f(x_{\hat{\ell}})-f^* &\becauseof[\leq]{\eqref{ineq:fk:in:Alg:delayed}} \left(\frac{\bn_\rho}{\hat{\ell}+1}\right)^2 (f(x_0)-f^*) \\
    & \leq \left(\frac{\bn_\rho}{2\bn_\rho}\right)^2 (f(x_0)-f^*) =\frac{1}{4} (f(x_0)-f^*).
\end{align*}
This implies 
$f(x_{\hat{\ell}}) \leq \frac{1}{4} f(x_0) + \frac{3}{4}f^* \leq \frac{1}{4} f(x_0) + \frac{3}{4}f(x_{\hat{k}}).$
Thus,
\begin{align*}
    f(x_{\hat{\ell}})-f(x_{\hat{k}}) &\leq \frac{1}{4} (f(x_0)-f(x_{\hat{k}})) \\
    &=\frac{1}{4} (f(x_0)-f(x_{\hat{\ell}})) + \frac{1}{4}(f(x_{\hat{\ell}})-f(x_{\hat{k}})).
\end{align*}
We conclude that $f(x_{\hat{\ell}})-f(x_{\hat{k}}) \leq \fracg{1}{3}(f(x_0)-f(x_{\hat{\ell}}))$. \qed

\subsection{Proofs for Algorithm \ref{alg:Opt:Alg:ERA}} \label{app:proof:theorem}

\begin{proof}[Proof of Property \ref{prop:optimal:algorithm}]

Since $z_0 \in \dom{f}$, we have that $z_0 \in V_f(\rho)$ for some $\rho > 0$. Additionally, each $z_{j}$ is obtained from a call to Algorithm \ref{Alg:Delayed} (step \ref{line:call:Ad}). As such, in view of Property \ref{property:Alg:Delayed}.\ref{property:Alg:Delayed:grad}, we have that the iterates $z_j$ satisfy $z_j \in V_f(\rho)$, $\forall j \in \Z_0^{j_{out}}$. Therefore, Property \ref{property:Alg:Delayed} can be applied to each call to $\cA_d$, thus proving claim \ref{prop:optimal:algorithm:delayed}. That is, for every $j \geq 0$, the iterates of Algorithm \ref{alg:Opt:Alg:ERA} satisfy:
\begin{subequations}
\begin{align}
    &f(z_{j+1}) \leq f(z_{j}) -\frac{1}{2L_f}\|g(z_{j})\|^2_*,
    \label{equ:monotocity:f:z} \\
    &f(z_{j+1})-f^* \leq   \left(\frac{\bn_\rho}{m_{j+1}+1}\right)^2(f(z_{j})-f^*), \label{ineq:recursion:improvement}\\ 
    &n_j\in (0,\ceil{4\bn_\rho}] \Rightarrow m_{j+1} \in [n_j,\ceil{4\bn_\rho}]. \label{ineq:m:jplut:nj}
\end{align}
\end{subequations}

Next, due to step \ref{line:nj} we have $m_j \leq n_j$, $j \in \Z_0^{j_{out}}$. Moreover, from \eqref{ineq:m:jplut:nj}, we have that $n_j \leq m_{j+1}$, ${\forall j \in \Z_0^{j_{out}}}$, which proves claim \ref{prop:optimal:algorithm:mj}.

Finally, we prove claim \ref{prop:optimal:algorithm:sj}. From the exit condition (step \ref{line:exit:condition}), we have
\begin{equation}\label{ineq:smaller:than:epsilon}
f(z_{j-1})-f(z_{j}) >\epsilon, \;\forall j\in \Z_1^{j_{out}}.
\end{equation}
Additionally, from \eqref{equ:monotocity:f:z} we have $f(z_{j-2})\geq f(z_{j-1})$, ${\forall j \in \Z_2^{j_{out}}}$. Thus,
\begin{equation*} \label{ineq:cociente}
f(z_{j-2})-f(z_j) \geq f(z_{j-1})-f(z_j) \becauseof[>]{\eqref{ineq:smaller:than:epsilon}} \epsilon>0, \; \forall j\in\Z_2^{j_{out}}.
\end{equation*}
Therefore, from step \ref{line:sj}, taking $j \geq 2$, we have 
\begin{equation*}
0 < s_j =  \sqrt{\fracg{f(z_{j-1})-f(z_{j})}{f(z_{j-2})-f(z_j)}} \leq 1,\;   \forall j\in \Z_2^{j_{out}}. \qedhere
\end{equation*}

\end{proof}

The proof of the following lemma relies upon some technical results on the iterates of Algorithm \ref{alg:Opt:Alg:ERA}, namely Lemmas \ref{lemma:lower:bound:expression} and \ref{lemma:T}, which we include in Appendix \ref{app:technical:lemmas}.

\begin{lemma} \label{lemma:D}
    Consider Algorithm \ref{alg:Opt:Alg:ERA} with the initial condition $z_0\in V_f(\rho)$, and $\epsilon>0$. Suppose that Assumption \ref{assum:quad:and:convergence} is satisfied and that $j_{out} \geq D$, where
\begin{equation*}
    D \doteq \ceil{5+ \frac{1}{\ln\,15 } \ln\left( 1+ \frac{f(z_0)-f^*}{\epsilon} \right) }.
\end{equation*}
Then, 
$m_{\ell+1} \leq \fracg{1}{\sqrt{15}}m_{\ell+1+D},\; \forall \ell \in  \Z_0^{j_{out}-D}.$
\end{lemma}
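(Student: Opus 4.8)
The plan is to show that over any window of $D$ consecutive calls, the quantity $m_{\ell+1}$ must grow by at least a factor of $\sqrt{15}$, using the exit-condition inequality \eqref{ineq:smaller:than:epsilon} together with the geometric decay \eqref{ineq:recursion:improvement} provided by Property \ref{property:Alg:Delayed}. First I would iterate \eqref{ineq:recursion:improvement} across the indices $\ell+1, \ell+2, \dots, \ell+1+D$ to obtain
\begin{equation*}
    f(z_{\ell+1+D}) - f^* \leq \left( \prod_{i=\ell+1}^{\ell+1+D} \frac{\bn_\rho}{m_{i+1}+1} \right)^{\!2} (f(z_\ell)-f^*) \leq \left(\frac{\bn_\rho}{m_{\ell+1}+1}\right)^{\!2(D)} (f(z_\ell) - f^*),
\end{equation*}
where in the last step I use that $\{m_j\}$ is non-decreasing (Property \ref{prop:optimal:algorithm}.\ref{prop:optimal:algorithm:mj}), so every factor is at most $\bn_\rho/(m_{\ell+1}+1)$; I should be careful with the exact number of factors and index shifts, but the structure is a product of $D$-or-so terms each bounded by the first.

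Next I would argue by contradiction: assume $m_{\ell+1} > \frac{1}{\sqrt{15}} m_{\ell+1+D}$ fails to hold at the \emph{conclusion}, i.e. suppose instead that $\sqrt{15}\, m_{\ell+1} > m_{\ell+1+D}$, and derive that $j_{out}$ must then be small, contradicting $j_{out} \geq D$. The bridge is the pair of technical results promised in the appendix (Lemmas \ref{lemma:lower:bound:expression} and \ref{lemma:T}), which I would expect to give a lower bound on $m_{\ell+1+D}$ in terms of $\bn_\rho$ and the growth of the $m_j$ (intuitively, $\bn_\rho/(m_{\ell+1}+1)$ must be bounded away from $1$, since otherwise the exit condition is already met). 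Combining such a bound with the displayed decay estimate and the fact that $f(z_{\ell+1+D}) - f^* \geq f(z_{\ell+D}) - f(z_{\ell+1+D}) > \epsilon$ (from \eqref{ineq:smaller:than:epsilon}, valid because $\ell+1+D \leq j_{out}$), I get
\begin{equation*}
    \epsilon < \left(\frac{\bn_\rho}{m_{\ell+1}+1}\right)^{\!2D}(f(z_0)-f^*),
\end{equation*}
which, upon taking logarithms and using the definition of $D$ together with a lower bound of the form $\bn_\rho/(m_{\ell+1}+1) \leq 1/\sqrt{15}$ coming from the contradiction hypothesis, forces $D$ to violate its own defining inequality.

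The main obstacle I anticipate is establishing the clean constant $\sqrt{15}$ rather than some messier bound: this requires that each factor $\bn_\rho/(m_{i+1}+1)$ in the product be controlled by $1/\sqrt{15}$, and that control is exactly what Lemmas \ref{lemma:lower:bound:expression} and \ref{lemma:T} must supply by relating $m_{\ell+1}$, $m_{\ell+1+D}$ and $\bn_\rho$ through the structure of steps \ref{line:sj}--\ref{line:nj} (the $s_j$ and the factor $4$ in $n_j = \max\{m_j, 4 s_j m_{j-1}\}$). I would therefore first state and prove those lemmas, and only then assemble the recursion, the exit-condition lower bound $\epsilon$, and the logarithmic count to pin down $D$ and hence the factor-$\sqrt{15}$ growth of $m_{\ell+1}$.
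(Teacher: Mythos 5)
Your overall architecture (contradiction, window of $D$ restarts, reduce to the technical lemmas) matches the paper's, but the mechanism you propose for closing the contradiction has a genuine gap. You want to compound the per-restart contraction \eqref{ineq:recursion:improvement} into $\epsilon < \left(\bn_\rho/(m_{\ell+1}+1)\right)^{2D}(f(z_0)-f^*)$ and then invoke ``a lower bound of the form $\bn_\rho/(m_{\ell+1}+1)\leq 1/\sqrt{15}$ coming from the contradiction hypothesis.'' No such bound follows. The negated conclusion only says $m_{\ell+1+D} < \sqrt{15}\,m_{\ell+1}$, i.e.\ that the $m_j$ barely grow over the window; it says nothing about how $m_{\ell+1}$ compares to $\bn_\rho$. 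Indeed $m_{\ell+1}$ can equal $1$ while $\bn_\rho$ is arbitrarily large, and the only general relation available, $m_{j+1}\leq\ceil{4\bn_\rho}$ from Theorem \ref{theo:conv:optimal:algorithm}.\ref{theo:conv:optimal:algorithm:iter:Ad}, points in the wrong direction. Your heuristic that $\bn_\rho/(m_{\ell+1}+1)$ ``must be bounded away from $1$, since otherwise the exit condition is already met'' does not hold either: the exit condition of Algorithm \ref{alg:Opt:Alg:ERA} is about the decrease $f(z_j)-f(z_{j+1})$, not about this ratio. So the compounded-contraction route, as sketched, cannot be completed.

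The paper's proof avoids $\bn_\rho$ entirely in this step and works with the \emph{measured} ratios $s_j^2 = \fracg{f(z_{j-1})-f(z_j)}{f(z_{j-2})-f(z_j)}$ and the update rule $n_j=\max\{m_j,4s_jm_{j-1}\}$. Three facts do the work in Lemma \ref{lemma:T}: (a) if $m_{\ell+1+T}<\sqrt{15}\,m_{\ell+1}$ then every $s_j$ in the window satisfies $s_j\leq\sqrt{15}/4$, since otherwise $m_{j+1}\geq 4s_jm_{j-1}>\sqrt{15}\,m_{j-1}$ would already force the forbidden jump; (b) the product $\prod_j(1/s_j^2-1)$ telescopes \emph{exactly} to $\fracg{f(z_\ell)-f(z_{\ell+1})}{f(z_{\ell+T-1})-f(z_{\ell+T})}$, which is at most $(f(z_0)-f^*)/\epsilon$ by the exit condition; and (c) the elementary inequality $\varphi(s)=(1/s^2-1)\max\{1,(4s)^4\}\geq 15$ on $(0,\sqrt{15}/4]$ (Lemma \ref{lemma:lower:bound:expression}), combined with a telescoping bound on $\sum_j\ln\max\{1,(4s_j)^4\}$, converts (a) and (b) into $(T-1)\ln 15<\ln\bigl(1+(f(z_0)-f^*)/\epsilon\bigr)+4\ln 15$, which bounds $T$ and yields the contradiction with the definition of $D$. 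If you want to repair your write-up, replace the $\bn_\rho$-based contraction argument with this $s_j$-based telescoping argument; also note that your intermediate inequality $f(z_{\ell+1+D})-f^*\geq f(z_{\ell+D})-f(z_{\ell+1+D})$ is not valid in general and is not needed.
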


\begin{proof}
    The proof is obtained by reductio ad absurdum. If there is $\ell \in  \Z_0^{j_{out}-D}$ such that $m_{\ell+1} > \frac{1}{\sqrt{15}}m_{\ell+1+D}$, then we obtain from Lemma \ref{lemma:T}.\ref{prop:T:T} (see Appendix \ref{app:technical:lemmas}) that 
\begin{equation*}
    D < 5+ \frac{1}{\ln\,15 } \ln\left( 1+ \frac{f(z_0)-f^*}{\epsilon} \right),
\end{equation*}
which contradicts the definition of $D$. \qedhere
\end{proof}

\begin{proof}[Proof of Theorem \ref{theo:conv:optimal:algorithm}]

Let $T \in \Z$ be such that
\begin{equation}\label{equ:dj:epsilon:d}
    f(z_{j})-f(z_{j+1}) > \epsilon, \; \forall j \in \Z_0^T,
\end{equation}
is satisfied. Then, defining $d_j \doteq f(z_{j})-f(z_{j+1})$, we have
\begin{equation*}
    f(z_0)-f(z_{T+1}) = \Sum{j=0}{T} d_j \geq (T+1)\left(\min\limits_{j=0,\ldots,T} d_j\right) > (T+1)\epsilon.
\end{equation*}
Thus,
    $T+1 < \fracg{f(z_0)-f(z_{T+1})}{\epsilon} \leq \fracg{f(z_0)-f^*}{\epsilon}\leq \fracg{\rho}{\epsilon}$,
from where we infer that the largest integer $T$ satisfying \eqref{equ:dj:epsilon:d} is bounded.
Consequently, the exit condition of Algorithm \ref{alg:Opt:Alg:ERA} (see step \ref{line:exit:condition}) is satisfied within a finite number of iterations, thus proving claim \ref{theo:conv:optimal:algorithm:finite}.

To prove claim \ref{theo:conv:optimal:algorithm:iter:Ad}, we start by noting that both $m_1$ and $m_2$ are no larger than $\ceil{4 \bn_\rho}$. Indeed, from step \ref{line:sj} we have that $s_0 = s_1 = 0$, which, in virtue of step \ref{line:nj}, implies that $n_0 = m_0 = 1$ and $n_1 = m_1$.
Since $n_0=1$ is no larger than $\ceil{4 \bn_\rho}$ we have from \eqref{ineq:m:jplut:nj} that $m_1$ is also upper-bounded by $\ceil{4 \bn_\rho}$. Moreover, since $n_1=m_1\leq \ceil{4 \bn_\rho}$, we obtain by the same reasoning that $m_2\leq \ceil{4 \bn_\rho}$.
We now prove that if $j \geq 2$ and $m_j \leq \ceil{4 \bn_\rho}$, then $m_{j+1} \leq \ceil{4 \bn_\rho}$. From step \ref{line:sj} we have
\begin{align*}
s_j^2 & = \frac{f(z_{j-1})-f(z_{j})}{f(z_{j-2})-f(z_j)} =   1-\frac{f(z_{j-2})-f(z_{j-1})}{f(z_{j-2})-f(z_j)} \\
      &\leq   1-\frac{f(z_{j-2})-f(z_{j-1})}{f(z_{j-2})-f^*}  \\
      &= \frac{f(z_{j-1})-f^*}{f(z_{j-2})-f^*} \becauseof[\leq]{\eqref{ineq:recursion:improvement}}  \left(\frac{\bn_\rho}{m_{j-1}+1}\right)^2.
\end{align*}
Thus, we have $s_j m_{j-1} \leq \bn_{\rho}$. Therefore,
\begin{equation*}
n_j = \max\{ m_j, 4 s_j m_{j-1} \} \leq \max\{ \ceil{4 \bn_\rho}, 4 \bn_\rho  \} = \ceil{4 \bn_\rho},
\end{equation*}
which, along with \eqref{ineq:m:jplut:nj}, leads to $m_{j+1}\leq  \ceil{4\bn_\rho}$, thus proving the claim.

Finally, to prove claim \ref{theo:conv:optimal:algorithm:iter:total}, we start by noting that the computation of each $z_{j+1}$ is obtained from $m_{j+1}$ iterations of $\cA$. Thus,
\begin{equation}\label{ineq:Naive:bound:on:NA}
N_\cA= \Sum{j=0}{j_{out}} m_{j+1} \becauseof[\leq]{\eqref{ineq:bound:on:mplus}} (1+j_{out})\ceil{4\bn_\rho}.
\end{equation}
Let us denote
\begin{equation*}
D \doteq \left\lceil 5 + \frac{1}{\ln 15  } \ln\left(1+ \frac{f(z_0)-f^*}{\epsilon} \right) \right\rceil.
\end{equation*}
Consider first the case $j_{out}<D$. Since both $j_{out}$ and $D$ are integers we infer from this inequality that $1+j_{out}\leq D$. 
This, along with \eqref{ineq:Naive:bound:on:NA}, implies that $N_\cA\leq \ceil{4\bn_\rho}D\leq \bar{N}_\cA$.

Suppose now that $j_{out}\geq D$. We first recall that Property \ref{prop:optimal:algorithm}.\ref{prop:optimal:algorithm:mj} states that the sequence $\{m_{j+1}\}_{j\geq 0}$ is non-decreasing. We now rewrite $j_{out}$ as $j_{out}=d+tD$, where $d\in \N_{0,D-1}$ and $t$ is a non-negative integer. Thus,
\begin{align*}
    N_\cA &= \Sum{j=0}{j_{out}} m_{j+1} = \Sum{j=0}{d} m_{j+1} + \Sum{j=1}{tD} m_{d+j+1}\\
          &\leq  D m_{d+1} +D \Sum{i=1}{t}m_{d+1+iD} = D\Sum{i=0}{t}m_{d+1+iD}.
\end{align*}
From Lemma \ref{lemma:D} we have
\begin{equation*}
m_{d+1+iD} \leq \frac{m_{d+1+(i+1)D}}{\sqrt{15}},\; \forall i \in\Z_0^{t-1}.
\end{equation*}
Thus,
$N_\cA \leq D\Sum{i=0}{t} m_{d+1+tD} \left( \fracg{1}{\sqrt{15}} \right)^{t-i}$.

\noindent Using now $m_{1+d+tD} \leq \bm=\ceil{4\bn_\rho}$ (see \eqref{ineq:bound:on:mplus}) we obtain
\begin{align*}
    \frac{N_\cA}{D\bm} & \leq \Sum{i=0}{t} \left( \frac{1}{\sqrt{15}} \right)^{t-i} = \Sum{j=0}{t}\left( \frac{1}{\sqrt{15}} \right)^j \\
    &\leq \Sum{j=0}{\infty}\left( \frac{1}{\sqrt{15}} \right)^j = \frac{\sqrt{15}}{\sqrt{15}-1}\leq \frac{e}{2}.
\end{align*}
Thus, $N_\cA \leq   \fracg{e}{2}\bm D =
\fracg{e}{2} \ceil{4 \bn_\rho}D$. \qedhere

\end{proof}

\subsection{Technical results on the iterates of Algorithm \ref{alg:Opt:Alg:ERA}} \label{app:technical:lemmas}

\begin{lemma}\label{lemma:lower:bound:expression}
The function $\varphi(s): \R \rightarrow \R$, defined as
\begin{equation*}
    \varphi(s) \doteq \left(\frac{1}{s^2}-1\right)\cdot\max\left\{1,(4s)^4\right\},
\end{equation*}
satisfies $\varphi(s) \geq 15, \; \forall s \in(0,\fracg{\sqrt{15}}{4}]$.
\end{lemma}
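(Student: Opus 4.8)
The natural approach is a case split at the point $s=1/4$ where the inner $\max$ changes branch, since $(4s)^4\geq 1$ precisely when $s\geq 1/4$. On each piece $\varphi$ reduces to an elementary one-variable expression whose minimum over the relevant interval is exactly $15$, so the bound is tight at the two endpoints $s=1/4$ and $s=\sqrt{15}/4$.

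First I would treat the case $s\in(0,1/4]$. Here $\max\{1,(4s)^4\}=1$, so $\varphi(s)=\tfrac{1}{s^2}-1$. Since $s\leq 1/4$ gives $\tfrac{1}{s^2}\geq 16$, we immediately get $\varphi(s)\geq 16-1=15$.

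Next I would treat the case $s\in[1/4,\sqrt{15}/4]$. Here $\max\{1,(4s)^4\}=(4s)^4=256\,s^4$, so
\begin{equation*}
    \varphi(s)=\Big(\tfrac{1}{s^2}-1\Big)\,256\,s^4 = 256\,s^2(1-s^2).
\end{equation*}
Substituting $u=s^2$, which ranges over $[\tfrac{1}{16},\tfrac{15}{16}]$ on this interval, it suffices to show $256\,u(1-u)\geq 15$. This follows from the factorization
\begin{equation*}
    256\,u(1-u)-15 = -256\Big(u-\tfrac{1}{16}\Big)\Big(u-\tfrac{15}{16}\Big),
\end{equation*}
whose right-hand side is nonnegative for $u\in[\tfrac{1}{16},\tfrac{15}{16}]$ because the first factor in brackets is nonnegative and the second is nonpositive. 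Combining the two cases gives $\varphi(s)\geq 15$ on all of $(0,\tfrac{\sqrt{15}}{4}]$.

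This argument is essentially routine; the only point requiring a little care is spotting the factorization in the second case (equivalently, that the quadratic $-256u^2+256u-15$ has roots $1/16$ and $15/16$), which is what makes the constant $\sqrt{15}/4$ the exact threshold. No deeper obstacle is expected.
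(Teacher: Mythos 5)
Your proof is correct, and it uses the same case split at $s=1/4$ as the paper, but it verifies each piece differently. On $(0,1/4]$ you bound $\tfrac{1}{s^2}-1\geq 16-1=15$ directly, whereas the paper only observes that $\varphi$ is decreasing there and defers to the second interval; both work. On $[1/4,\sqrt{15}/4]$ the paper uses calculus: the derivative $2s(1-2s^2)$ of $s^2-s^4$ shows the function increases then decreases, so the minimum of $4^4(s^2-s^4)$ sits at the endpoints, where it evaluates to $15$. You instead substitute $u=s^2$ and exhibit the exact factorization
\begin{equation*}
    256\,u(1-u)-15 = -256\left(u-\tfrac{1}{16}\right)\left(u-\tfrac{15}{16}\right),
\end{equation*}
whose sign on $[\tfrac{1}{16},\tfrac{15}{16}]$ is immediate. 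Your route is purely algebraic and makes it transparent that $1/4$ and $\sqrt{15}/4$ are exactly the points where the bound is tight (i.e., why $\sqrt{15}/4$ is the right threshold), while the paper's derivative argument generalizes more readily if one wanted to re-optimize the constants. Both are complete; no gap.
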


\begin{proof}
We have that
\begin{equation*}
    \varphi(s) = \bsis{lcl} 
    4^4(s^2-s^4) & \mbox{if} & s>\frac{1}{4}, \\
    \fracg{1}{s^2}-1 & \mbox{if} & s\leq \frac{1}{4}.\esis
\end{equation*}
It is clear that $\varphi(\cdot)$ is monotonically decreasing in $(0,\frac{1}{4}]$. Thus, 
$$ \min\limits_{s\in(0,\frac{\sqrt{15}}{4}]} \varphi(s) =  \min\limits_{s\in[\frac{1}{4},\frac{\sqrt{15}}{4}]} \varphi(s) =
 \min\limits_{s\in[\frac{1}{4},\frac{\sqrt{15}}{4}]}  4^4(s^2-s^4) .$$
We notice that the derivative of $s^2-s^4$ is $2s(1-2s^2)$, which vanishes only once in the interval of interest (at $s=\frac{1}{\sqrt{2}}$). From here we infer that $s^2-s^4$ is increasing in $[\frac{1}{4},\frac{1}{\sqrt{2}})$ and decreasing in $(\frac{1}{\sqrt{2}}, \frac{\sqrt{15}}{4}]$. Thus, the minimum is attained at the extremes of the interval $[\frac{1}{4},\frac{\sqrt{15}}{4}]$. That is, we conclude that
\begin{equation*}
    \min\limits_{s\in(0,\frac{\sqrt{15}}{4}]} \varphi(s) = \min\{ \varphi(\frac{1}{4}), \varphi(\frac{\sqrt{15}}{4})\} = \min\{ 15,15\}=15. \qedhere
\end{equation*}
\end{proof}

\begin{lemma}[Technical results on the iterates of Alg. \ref{alg:Opt:Alg:ERA}] \label{lemma:T}
Consider Algorithm \ref{alg:Opt:Alg:ERA} with the initial condition $z_0\in V_f(\rho)$, and $\epsilon>0$. Suppose that Assumption \ref{assum:quad:and:convergence} is satisfied and that $j_{out}\geq 2$. 
Suppose also that there is $T\in \Z_2^{j_{out}}$ and  $\ell\in \Z_0^{j_{out}-T}$ such that
$m_{\ell+1} > \fracg{1}{\sqrt{15}}m_{\ell+1+T}.$
Then: 
\blista
\item $s_j\in \left(0, \frac{\sqrt{15}}{4}\right]$, $\forall j\in \Z_{\ell+2}^{\ell+T}$.
\item $\Sum{j = \ell+2}{\ell+T} \ln\left( \max\,\{1,(4s_j)^4\}\right)< 4\ln\, 15$.
\item $\Sum{j = \ell+2}{\ell+T} \ln \, (\fracg{1}{s_j^2}-1)\leq \ln\left( 1+\fracg{f(z_0)-f^*}{\epsilon}\right)$.
\item $T<5+\fracg{1}{\ln\,15} \ln\left( 1+\fracg{f(z_0)-f^*}{\epsilon}\right)$. \label{prop:T:T}
\elista
\end{lemma}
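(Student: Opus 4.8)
The plan is to prove the four claims in turn, each using the earlier ones. Everything hinges on a single observation about the window occupied by the iteration counts: combining the hypothesis $m_{\ell+1} > \frac{1}{\sqrt{15}}m_{\ell+1+T}$ with the monotonicity of $\{m_j\}$ (Property \ref{prop:optimal:algorithm}.\ref{prop:optimal:algorithm:mj}) gives
\begin{equation*}
    m_{\ell+1} \leq m_j \leq m_{\ell+1+T} < \sqrt{15}\, m_{\ell+1}, \qquad \forall\, j \in \Z_{\ell+1}^{\ell+1+T},
\end{equation*}
which I would record at the outset and reuse repeatedly. For claim \textit{(i)}: for $j \in \Z_{\ell+2}^{\ell+T}$, step \ref{line:nj} gives $4 s_j m_{j-1} \leq n_j$, and Property \ref{prop:optimal:algorithm}.\ref{prop:optimal:algorithm:mj} gives $n_j \leq m_{j+1}$; since $m_{j-1}$ and $m_{j+1}$ both lie in the window, $4 s_j m_{j-1} \leq m_{j+1} \leq m_{\ell+1+T} < \sqrt{15}\, m_{\ell+1} \leq \sqrt{15}\, m_{j-1}$, hence $s_j < \sqrt{15}/4$; together with $s_j \in (0,1]$ from Property \ref{prop:optimal:algorithm}.\ref{prop:optimal:algorithm:sj}, this is exactly $s_j \in (0,\sqrt{15}/4]$.

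For claim \textit{(ii)}: the inequality $4 s_j m_{j-1} \leq m_{j+1}$ together with $m_{j+1} \geq m_{j-1}$ (monotonicity) gives $\max\{1,(4 s_j)^4\} \leq (m_{j+1}/m_{j-1})^4$, so $\ln\max\{1,(4 s_j)^4\} \leq 4(\ln m_{j+1} - \ln m_{j-1})$. I would sum this over $j \in \Z_{\ell+2}^{\ell+T}$; the right-hand side telescopes in a skip-by-two fashion to $4\left( (\ln m_{\ell+T} - \ln m_{\ell+2}) + (\ln m_{\ell+1+T} - \ln m_{\ell+1}) \right)$, and, using $m_{\ell+T} \leq m_{\ell+1+T} < \sqrt{15}\, m_{\ell+2}$ and $m_{\ell+1+T} < \sqrt{15}\, m_{\ell+1}$ from the window, each bracketed difference is strictly below $\frac{1}{2}\ln 15$, so the whole sum is strictly below $4\ln 15$.

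For claim \textit{(iii)}: writing $d_k \doteq f(z_k) - f(z_{k+1})$, step \ref{line:sj} can be recast as $\frac{1}{s_j^2} - 1 = \frac{d_{j-2}}{d_{j-1}}$, using $f(z_{j-2})-f(z_j) = d_{j-2}+d_{j-1}$ and $f(z_{j-1})-f(z_j) = d_{j-1}$; each $d_k$ appearing here is $> \epsilon > 0$ by \eqref{ineq:smaller:than:epsilon} (the constraints $\ell \geq 0$ and $\ell + T \leq j_{out}$ keep every such $k$ inside the admissible range), so the logarithms are well defined and $s_j < 1$. Then $\sum_{j=\ell+2}^{\ell+T}\ln(\frac{1}{s_j^2}-1) = \sum_{j=\ell+2}^{\ell+T}(\ln d_{j-2} - \ln d_{j-1})$ telescopes to $\ln d_\ell - \ln d_{\ell+T-1}$, and bounding $d_\ell = f(z_\ell)-f(z_{\ell+1}) \leq f(z_\ell)-f^* \leq f(z_0)-f^*$ (monotonicity of $\{f(z_j)\}$ from \eqref{equ:monotocity:f:z}) and $d_{\ell+T-1} > \epsilon$ (from \eqref{ineq:smaller:than:epsilon}) gives the bound $\ln\left(1 + \frac{f(z_0)-f^*}{\epsilon}\right)$.

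Claim \textit{(iv)} then follows by adding per-term inequalities: by claim \textit{(i)} every $s_j$ with $j \in \Z_{\ell+2}^{\ell+T}$ lies in $(0,\sqrt{15}/4]$, so Lemma \ref{lemma:lower:bound:expression} yields $\ln(\frac{1}{s_j^2}-1) + \ln\max\{1,(4 s_j)^4\} = \ln\varphi(s_j) \geq \ln 15$; summing these $T-1$ inequalities and substituting the bounds from claims \textit{(ii)} and \textit{(iii)} gives $(T-1)\ln 15 < 4\ln 15 + \ln\left(1 + \frac{f(z_0)-f^*}{\epsilon}\right)$, i.e. $T < 5 + \frac{1}{\ln 15}\ln\left(1 + \frac{f(z_0)-f^*}{\epsilon}\right)$. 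The part I expect to take the most care is not the analysis — once the window estimate is in hand, the two telescoping computations (the skip-by-two one in \textit{(ii)} and the shifted one in \textit{(iii)}) and the pointwise bound from Lemma \ref{lemma:lower:bound:expression} do all the work — but the bookkeeping of index ranges: checking that $n_j \leq m_{j+1}$ is available, that \eqref{ineq:smaller:than:epsilon} applies to every increment $d_k$ invoked, and that $s_j \in (0,1]$ wherever used, each of which requires the relevant index to lie in $\Z_0^{j_{out}}$, $\Z_1^{j_{out}}$, or $\Z_2^{j_{out}}$, respectively.
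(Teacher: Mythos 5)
Your proposal is correct and follows essentially the same route as the paper's proof: the monotonicity window on $\{m_j\}$, the two telescoping arguments for claims \textit{(ii)} and \textit{(iii)}, and the pointwise bound from Lemma \ref{lemma:lower:bound:expression} to count the terms in claim \textit{(iv)}. The only cosmetic differences are that you prove claim \textit{(i)} directly from the window estimate rather than by contradiction, and that you raise to the fourth power term-by-term in \textit{(ii)} instead of multiplying the summed inequality by $4$ at the end.
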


\begin{proof}

Denote $f_j=  f(z_j)$, $j\in \Z_0^{j_{out}+1}$. From $j\geq 2$ and step \ref{line:sj} of Algorithm \ref{alg:Opt:Alg:ERA} we have 
$$s_j^2=\frac{f_{j-1}-f_{j}}{f_{j-2}-f_{j}}, \; j\in \Z_2^{j_{out}}.$$
The inequality $s_j>0$, $\forall j\in \Z_{\ell+2}^{\ell+T}$ follows from Property \ref{prop:optimal:algorithm}.\ref{prop:optimal:algorithm:sj}.
In order to prove the first claim it remains to prove the inequality $s_j \leq \frac{\sqrt{15}}{4}$, $\forall j\in \Z_{\ell+2}^{\ell+T}$. We proceed by reductio ad absurdum. Suppose that there is $j\in \Z_{\ell+2}^{\ell+T}$ such that $s_j>\frac{\sqrt{15}}{4}$. In this case,  
$$m_{j+1} \becauseof[\geq]{\eqref{ineq:m:n:m}} n_j =\max\{ m_{j},  4s_j m_{j-1} \} \geq 4 s_j m_{j-1} > \sqrt{15} m_{j-1},$$
which along the non-decreasing nature of the sequence $\{m_j\}$  (Property \ref{prop:optimal:algorithm}.\ref{prop:optimal:algorithm:mj}) leads to
\begin{equation*}
m_{\ell+1+T}\geq m_{j+1} > \sqrt{15}m_{j-1} \geq  \sqrt{15} m_{\ell+1},
\end{equation*}
contradicting the assumption of the property, proving claim~\textit{(i)}.

From the non-decreasing nature of the sequence $\{m_j\}$ (Property \ref{prop:optimal:algorithm}.\ref{prop:optimal:algorithm:mj}) we have, for every $j\in\Z_{\ell+2}^{\ell+T}$,
\begin{equation*}
    m_{j+1} \becauseof[\geq]{\eqref{ineq:m:n:m}} n_j =\max \,\{ m_{j},  4s_j m_{j-1} \}\geq  m_{j-1} \cdot \max \,\{ 1,  4s_j \}.
\end{equation*}
Equivalently,
$\ln \left( \max\,\left\{ 1, 4s_j \right\}\right)  \leq \ln \frac{m_{j+1}}{m_{j-1}},  \; \forall j \in \Z_{\ell+2}^{\ell+T}$.
This implies
\begin{align} 
    & \Sum{j=\ell+2}{\ell+T}\ln \left( \max\,\left\{ 1, 4s_j \right\}\right)  \leq   \Sum{j=\ell+2}{\ell+T}\ln \frac{m_{j+1}}{m_{j-1}}  \nonumber \\
    & =  \ln \frac{m_{\ell+T} m_{\ell+1+T}}{m_{\ell+1}m_{\ell+2}}  \leq  \ln \frac{ m_{\ell+1+T}^2}{m_{\ell+1}^2} =2 \ln \frac{ m_{\ell+1+T}}{m_{\ell+1}} \nonumber \\
    & < 2 \ln\, \sqrt{15} = \ln 15. \label{ineq:maxi:ln}
\end{align}
The second claim is obtained multiplying the last inequality by 4. To prove the third claim we notice that 
\begin{equation*}
\Prod{j=\ell+2}{\ell+T} (\fracg{1}{s_j^2} -1) = \Prod{j=\ell+2}{\ell+T} \fracg{f_{j-2}-f_{j-1}}{f_{j-1}-f_j} = \frac{f_{\ell}-f_{\ell+1}}{f_{\ell+T-1}-f_{\ell+T}}.
\end{equation*} 
Since $\ell+T\leq j_{out}$ we have $f_{\ell+T-1}-f_{\ell+T} > \epsilon >0$. Using this inequality we obtain
\begin{equation*}
\Prod{j=\ell+2}{\ell+T}  (\fracg{1}{s_j^2} -1) < \frac{f_{\ell}-f_{\ell+1}}{\epsilon} \becauseof[\leq]{\eqref{ineq:recursion:improvement}} \frac{f_{0}-f_{\ell+1}}{\epsilon} \leq  \frac{f_0-f^*}{\epsilon},
\end{equation*} 
from where the third claim directly follows.
In order to prove the last claim of the property we sum the inequalities given by the second and third claims to obtain
\begin{align} \label{ineq:ln:E}
    &\Sum{j=\ell+2}{\ell+T} \ln \left(\left(\fracg{1}{s_j^2}-1\right)\cdot\max\left\{1,(4s_j)^4\right\}  \right) \nonumber \\
                          &< \ln\left( 1+\fracg{f_0-f^*}{\epsilon} \right)+ 4\ln 15.
\end{align}
From the first claim we have $s_j \in \left(0, \frac{\sqrt{15}}{4}\right]$, $\forall j\in  \Z_{\ell+2}^{\ell+T}$.
Thus, the left term of \eqref{ineq:ln:E} can be lower bounded by means of the following inequality (Lemma \ref{lemma:lower:bound:expression})
\begin{equation*}
    15 \leq \left(\frac{1}{s^2}-1\right)\cdot\max\left\{1,(4s)^4\right\} , \; \forall s \in \left( 0, \textstyle{\frac{\sqrt{15}}{4}} \right].
\end{equation*}
That is,  
$\Sum{j=\ell+2}{\ell+T} \ln\,15  < \ln\left(1+ \fracg{f_0-f^*}{\epsilon} \right) + 4\ln\,15.\quad\quad$
Equivalently, $(T {-} 1)  \ln\,15 < \ln\left(1{+} \fracg{f_0-f^*}{\epsilon} \right) + 4\ln\,15. \qedhere$
\end{proof}
    
\end{appendix}

\bibliographystyle{IEEEtran}
\bibliography{IEEEabrv, Bib_Restart_First_Order_Methods}

\begin{thebibliography}{10}
\providecommand{\url}[1]{#1}
\csname url@samestyle\endcsname
\providecommand{\newblock}{\relax}
\providecommand{\bibinfo}[2]{#2}
\providecommand{\BIBentrySTDinterwordspacing}{\spaceskip=0pt\relax}
\providecommand{\BIBentryALTinterwordstretchfactor}{4}
\providecommand{\BIBentryALTinterwordspacing}{\spaceskip=\fontdimen2\font plus
\BIBentryALTinterwordstretchfactor\fontdimen3\font minus
  \fontdimen4\font\relax}
\providecommand{\BIBforeignlanguage}[2]{{%
\expandafter\ifx\csname l@#1\endcsname\relax
\typeout{** WARNING: IEEEtran.bst: No hyphenation pattern has been}%
\typeout{** loaded for the language `#1'. Using the pattern for}%
\typeout{** the default language instead.}%
\else
\language=\csname l@#1\endcsname
\fi
#2}}
\providecommand{\BIBdecl}{\relax}
\BIBdecl

\bibitem{Nesterov_S_18}
Y.~Nesterov, \emph{Lectures on convex optimization}.\hskip 1em plus 0.5em minus
  0.4em\relax Springer, 2018, vol. 137.

\bibitem{Beck_SIAM_17}
A.~Beck, \emph{First-order methods in optimization}.\hskip 1em plus 0.5em minus
  0.4em\relax SIAM, 2017, vol.~25.

\bibitem{Beck09}
A.~Beck and M.~Teboulle, ``A fast iterative shrinkage-thresholding algorithm
  for linear inverse problems,'' \emph{{SIAM} J. Imaging Sciences}, vol.~2,
  no.~1, pp. 183--202, 2009.

\bibitem{boyd2011distributed}
S.~Boyd, N.~Parikh, E.~Chu, B.~Peleato, J.~Eckstein \emph{et~al.},
  ``Distributed optimization and statistical learning via the alternating
  direction method of multipliers,'' \emph{Foundations and
  Trends{\textregistered} in Machine learning}, vol.~3, no.~1, pp. 1--122,
  2011.

\bibitem{Nesterov83}
Y.~Nesterov, ``A method of solving a convex programming problem with
  convergence rate \textit{O}{$(1/k^2)$},'' \emph{Sov. Math. Dokl.}, vol.~27,
  no.~2, pp. 372--376, 1983.

\bibitem{Pejcic_ECC_16}
I.~Pejcic and C.~N. Jones, ``Accelerated {ADMM} based on accelerated
  {D}ouglas-{R}achford splitting,'' in \emph{IEEE European Control Conference
  (ECC)}, 2016, pp. 1952--1957.

\bibitem{Zheng_FADMM_17}
Y.~Zheng, G.~Fantuzzi, A.~Papachristodoulou, P.~Goulart, and A.~Wynn, ``Fast
  {ADMM} for homogeneous self-dual embedding of sparse {SDP}s,''
  \emph{IFAC-PapersOnLine}, vol.~50, no.~1, pp. 8411--8416, 2017.

\bibitem{patrinos2014douglas}
P.~Patrinos, L.~Stella, and A.~Bemporad, ``{D}ouglas-{R}achford splitting:
  {C}omplexity estimates and accelerated variants,'' in \emph{53rd IEEE
  Conference on Decision and Control}, 2014, pp. 4234--4239.

\bibitem{Goldstein_SJIS_2014}
T.~Goldstein, B.~O'Donoghue, S.~Setzer, and R.~Baraniuk, ``Fast alternating
  direction optimization methods,'' \emph{SIAM Journal on Imaging Sciences},
  vol.~7, no.~3, pp. 1588--1623, 2014.

\bibitem{Pu_IFAC_14}
Y.~Pu, M.~N. Zeilinger, and C.~N. Jones, ``Fast alternating minimization
  algorithm for model predictive control,'' \emph{IFAC Proceedings Volumes},
  vol.~47, no.~3, pp. 11\,980--11\,986, 2014.

\bibitem{Alamir:13}
M.~Alamir, ``Monitoring control updating period in fast gradient based
  {NMPC},'' in \emph{IEEE European Control Conference}, 2013, pp. 3621--3626.

\bibitem{Richter_TAC_2011}
S.~Richter, C.~N. Jones, and M.~Morari, ``Computational complexity
  certification for real-time {MPC} with input constraints based on the fast
  gradient method,'' \emph{IEEE Transactions on Automatic Control}, vol.~57,
  no.~6, pp. 1391--1403, 2012.

\bibitem{Stathopoulos_TAC_2016}
G.~Stathopoulos, M.~Korda, and C.~N. Jones, ``Solving the infinite-horizon
  constrained {LQR} problem using accelerated dual proximal methods,''
  \emph{IEEE Transactions on Automatic Control}, vol.~62, no.~4, pp.
  1752--1767, 2016.

\bibitem{Patrinos14}
P.~Patrinos and A.~Bemporad, ``An accelerated dual gradient-projection
  algorithm for embedded linear model predictive control,'' \emph{IEEE
  Transactions on Automatic Control}, vol.~59, no.~1, pp. 18--33, 2014.

\bibitem{Kogel_ECC_11}
M.~K{\"o}gel and R.~Findeisen, ``Fast predictive control of linear systems
  combining {N}esterov's gradient method and the method of multipliers,'' in
  \emph{50th IEEE Conference on Decision and Control and European Control
  Conference}, 2011, pp. 501--506.

\bibitem{Nedelcu_JCO_2014}
V.~Nedelcu, I.~Necoara, and Q.~Tran-Dinh, ``Computational complexity of inexact
  gradient augmented {L}agrangian methods: application to constrained {MPC},''
  \emph{SIAM Journal on Control and Optimization}, vol.~52, no.~5, pp.
  3109--3134, 2014.

\bibitem{Krupa:PLC:2020}
P.~Krupa, D.~Limon, and T.~Alamo, ``Implementation of model predictive control
  in programmable logic controllers,'' \emph{IEEE Transactions on Control
  Systems Technology}, vol.~29, no.~3, pp. 1117--1130, 2021.

\bibitem{Tao_JO_2016}
S.~Tao, D.~Boley, and S.~Zhang, ``Local linear convergence of {ISTA} and
  {FISTA} on the {LASSO} problem,'' \emph{SIAM Journal on Optimization},
  vol.~26, no.~1, pp. 313--336, 2016.

\bibitem{Brunton_IFAC_2016}
S.~L. Brunton, J.~L. Proctor, and J.~N. Kutz, ``Sparse identification of
  nonlinear dynamics with control {(SINDYc)},'' \emph{IFAC-PapersOnLine},
  vol.~49, no.~18, pp. 710--715, 2016.

\bibitem{Donoghue:13}
B.~O’Donoghue and E.~Candes, ``Adaptive restart for accelerated gradient
  schemes,'' \emph{Foundations of computational mathematics}, vol.~15, no.~3,
  pp. 715--732, 2015.

\bibitem{Giselsson:14CDC}
P.~Giselsson and S.~Boyd, ``Monotonicity and restart in fast gradient
  methods,'' in \emph{53rd IEEE Conference on Decision and Control}, 2014, pp.
  5058--5063.

\bibitem{Necoara:18}
I.~Necoara, Y.~Nesterov, and F.~Glineur, ``Linear convergence of first order
  methods for non-strongly convex optimization,'' \emph{Mathematical
  Programming}, vol. 175, no.~1, pp. 69--107, 2019.

\bibitem{Nesterov13}
Y.~Nesterov, ``Gradient methods for minimizing composite functions,''
  \emph{Mathematical Programming}, vol. 140, pp. 125--161, 2013.

\bibitem{Fercoq_JNA_2019}
O.~Fercoq and Z.~Qu, ``Adaptive restart of accelerated gradient methods under
  local quadratic growth condition,'' \emph{IMA Journal of Numerical Analysis},
  vol.~39, no.~4, pp. 2069--2095, 2019.

\bibitem{AlamoECC:19}
T.~Alamo, P.~Krupa, and D.~Limon, ``Restart {FISTA} with global linear
  convergence,'' in \emph{18th IEEE European Control Conference}, 2019, pp.
  1969--1974.

\bibitem{AlamoCDC:19}
------, ``Gradient based restart {FISTA},'' in \emph{58th IEEE Conference on
  Decision and Control}, 2019, pp. 3936--3941.

\bibitem{drusvyatskiy2018error}
D.~Drusvyatskiy and A.~S. Lewis, ``Error bounds, quadratic growth, and linear
  convergence of proximal methods,'' \emph{Mathematics of Operations Research},
  vol.~43, no.~3, pp. 919--948, 2018.

\bibitem{Wang:14}
P.-W. Wang and C.-J. Lin, ``Iteration complexity of feasible descent methods
  for convex optimization,'' \emph{Journal of Machine Learning Research},
  vol.~15, pp. 1523--1548, 2014.

\bibitem{Nesterov04}
Y.~Nesterov, \emph{Introductory lectures on convex optimization: {A} basic
  course}.\hskip 1em plus 0.5em minus 0.4em\relax Springer, 2004, vol.~87.

\bibitem{Krupa_arXiv_ellipMPC_21_v2}
P.~Krupa, R.~Jaouani, D.~Limon, and T.~Alamo, ``A sparse {ADMM}-based solver
  for linear {MPC} subject to terminal quadratic constraint,''
  \emph{{arXiv:2105.08419v2}}, 2021.

\bibitem{Krupa_Thesis_21}
P.~Krupa, ``Implementation of {MPC} in embedded systems using first order
  methods,'' Ph.D. dissertation, University of Seville, 2021, available at
  {arXiv:2109.02140}.

\end{thebibliography}

\end{document}